\newtheorem{theorem}{Theorem}[section]
\newtheorem{lemma}[theorem]{Lemma}
\newtheorem{corollary}[theorem]{Corollary}
\theoremstyle{definition}
\newtheorem{example}[theorem]{Example}
\theoremstyle{remark}
\newtheorem{remark}[theorem]{Remark}
\numberwithin{equation}{section}
\newcommand{\C}{\mathbb{C}}
\newcommand{\Sp}{\mathbb{S}}
\newcommand{\R}{\mathbb{R}}
\newcommand{\F}{\mathbb{F}}
\newcommand{\Z}{\mathbb{Z}}
\newcommand{\SU}{\operatorname{SU}}
\newcommand{\U}{\operatorname{U}}
\newcommand{\OO}{\operatorname{O}}
\newcommand{\SO}{\operatorname{SO}}
\newcommand{\GL}{\operatorname{GL}}
\newcommand{\Real}{\operatorname{Re}}
\newcommand{\Imaginary}{\operatorname{Im}}
\newcommand{\cl}{\mathrm{cl}}
\newcommand{\G}{\mathbf{G}}
\newcommand{\ftimes}[2]{{\lrsubscripts{\times}{#1}{#2}}}\newcommand{\HOM}{\operatorname{Hom}}
\newcommand{\HOMEO}{\operatorname{Homeo}}
\begin{document}

\title[Euler characteristics of symplectic quotients and $\OO(2)$-spaces]
{Euler characteristics of linear symplectic quotients and $\OO(2)$-spaces}

\author{Carla Farsi}
\address{Department of Mathematics, University of Colorado at Boulder,
    UCB 395, Boulder, CO 80309-0395}
\email{farsi@euclid.colorado.edu}

\author{Hannah Mobley}
\address{Department of Mathematics and Statistics,
    Rhodes College, 2000 N. Parkway, Memphis, TN 38112}
\email{meihe-25@rhodes.edu}

\author{Christopher Seaton}
\address{Department of Mathematics and Statistics,
    Rhodes College, 2000 N. Parkway, Memphis, TN 38112}
\email{seatonc@rhodes.edu}

\keywords{definable Euler characteristic,
circle-representation, O(2)-representation, symplectic quotient, orbit space definable groupoid.}
\subjclass[2020]{Primary 57S15; Secondary 22A22, 14P10, 57R18}

\thanks{
C.F. was partially supported by the Simons Foundation Collaboration Grant for Mathematicians \#523991.
H.M. was supported by the Rhodes College Summer Fellowship Program and by the National Science Foundation under Grant \#2015553.
C.S. was supported by an AMS-Simons Research Enhancement Grant for PUI Faculty.
}

\begin{abstract}
We give explicit computations of the $\Gamma$-Euler characteristic of several families of orbit space definable translation groupoids. These include the translation groupoids associated to finite-dimensional linear representations of the circle and real and unitary representations of the real $2\times 2$ orthogonal group. In the case of translation groupoids associated to linear symplectic quotients of representations of a arbitrary compact Lie group $G$, we show that unlike the other cases, the $\Gamma$-Euler characteristic depends only on the group and not on the representation.
\end{abstract}


\maketitle

\tableofcontents


\section{Introduction}
\label{sec:intro}

The Euler characteristic $\chi$ is classically defined in the context of triangulable topological spaces as a homotopy invariant.
It can be generalized to definable spaces, a generalization of semialgebraic spaces, sacrificing homotopy invariance but preserving
finite additivity. The Euler characteristic has been generalized to many contexts, including the introduction of many Euler characteristics
for orbifolds, spaces locally modeled on the quotient of a manifold by a finite group. These various orbifold Euler characteristics can be
indexed by a discrete group $\Gamma$, resulting in the $\Gamma$-Euler characteristics $\chi_\Gamma$. Orbifolds can be represented by certain
classes of groupoids; when the groupoid corresponds to a global action of a group $G$ on a topological space $X$, it is called a translation groupoid
and denoted $G\ltimes X$.

In the recent paper \cite{FarsiSeatonEC}, the first and third author gave the definition of the $\Gamma$-Euler characteristics
$\chi_\Gamma$ of sufficiently well-behaved topological groupoids. This extended the work of \cite{GZLMHHigherOrderCompact},
which generalized a large collection of orbifold Euler characteristics to translation groupoids by compact Lie groups that
may have infinite isotropy and hence do not present orbifolds.

For each finitely presented group $\Gamma$, the $\Gamma$-Euler characteristic $\chi_\Gamma$ is an invariant of
\emph{orbit space definable groupoids}, roughly topological groupoids whose orbit spaces and their partitions into (weak) orbit types
can be made definable. The definitions of $\chi_\Gamma$ and orbit space definable groupoids
are recalled in Section~\ref{sec:background}. The $\Gamma$-Euler characteristics $\chi_\Gamma$ are invariant under Morita equivalence of topological groupoids,
multiplicative over Cartesian products of groupoids, and additive over disjoint unions of orbit spaces in the appropriate sense.
With additional mild hypotheses on the orbit space definable groupoids, the $\Gamma$-Euler characteristics can be realized as the usual
(finitely additive) Euler characteristics of definable topological spaces, the \emph{$\Gamma$-inertia spaces}. For an orbit space
definable groupoid $\G$, the $\Gamma$-inertia space is the orbit space of the action groupoid for the conjugation action of $\G$ on
$\operatorname{Hom}(\Gamma,\G)$, which can be understood as the space of homomorphisms from $\Gamma$ into the isotropy groups of $\G$.
When $\Gamma = \Z$, $\operatorname{Hom}(\Z,\G)$ can be identified with the collection of isotropy groups. See \cite{FarsiSeatonEC} for more details.

In this paper, we give explicit computations of $\chi_\Gamma$ for several collections of translation groupoids.
In each case, the translation groupoid is given by a linear action of a compact Lie group $G$ on a finite-dimensional vector space $V$
or a $G$-invariant subset $X$ of $V$. We consider the explicit cases where $G = \Sp^1$ is the circle or $G = \OO(2)$
is the real $2\times 2$ orthogonal group in detail, and also consider the
linear symplectic quotient associated to an arbitrary compact Lie group. A key tool is the use of circle actions on
the orbit space of the groupoid to simplify these computations. To this end, we generalize the localization formula of
\cite{KobayashiFixed} for the Euler characteristic of a compact Riemannian manifold on which a torus acts smoothly to affine definable spaces
with definable torus actions; see Theorem~\ref{thrm:TorusLocalization}.

These computations serve several purposes. First, they give an indication of the degree to which $\chi_\Gamma(G\ltimes X)$
of a translation groupoid $G\ltimes X$ depends on the group $G$ and its action on $X$. In some of the simplest examples one
might initially compute, $\chi_\Gamma(G\ltimes X) = 0$;
however, the cases considered here indicate that $\chi_\Gamma(G\ltimes X)$ generally
depends heavily on both $G$ and its action on $X$; see Remarks~\ref{rem:FreeECS1RepDependence} and \ref{rem:FreeECO2RepDependence}.
In addition, these computations offer counterexamples to properties one might expect from $\chi_\Gamma$; see e.g.
Remark~\ref{rem:FreeECS1RepAdditive}.

Our computations also indicate general properties of the $\Gamma$-Euler characteristics $\chi_\Gamma$. A notable example is the case of linear
symplectic quotients,
the (singular) symplectic quotients associated to unitary representations of compact Lie groups.
As is recalled at the end of Section~\ref{sec:SympQuot}, linear symplectic quotients are local models for symplectic quotients of manifolds,
whose topology is an active area of investigation \cite{DelarueRamacherSchmitt}.
Explicit computations of $\chi_\Gamma$ for translation groupoids
defining linear symplectic quotients corresponding to representations of $\Sp^1$ and $\OO(2)$ suggested that
the $\chi_\Gamma$ of these groupoids have a particularly simple form in general.
This led to the proof of the main result of this paper,
Theorem~\ref{thrm:SympQuot}, which demonstrates that $\chi_\Gamma$ of a linear symplectic quotient by a compact Lie group $G$ depends
only on the group and not on the representation, and is therefore equal to $\chi_\Gamma$ of the group $G$ as a groupoid with one unit.
This differs significantly from many of the other cases considered in this paper, for which $\chi_\Gamma$ depends both on the group
and the action.

The outline of this paper is as follows. In Section~\ref{sec:background}, we recall the background needed to
define $\chi_\Gamma$ focusing on the case of translation groupoids by linear group actions.
Section~\ref{sec:SimpCircleAction} presents the localization formula for the Euler characteristic in the presence of torus
actions as well as specializations that allow us to avoid explicit descriptions of some of the orbit
types in the orbit space $G\backslash X$ in the computation of $\chi_\Gamma(G\ltimes X)$. In Section~\ref{sec:CircleReps}
we compute $\chi_\Gamma(\Sp^1\ltimes V)$ where $V$ is a finite-dimensional representation of the circle as well
as $\chi_\Gamma(\Sp^1\ltimes X)$ for an $\Sp^1$-invariant subspace of such a $V$; Section~\ref{sec:O2Reps} gives similar
computations for a representation of the orthogonal group $\OO(2)$. In Section~\ref{sec:SympQuot}, we
prove Theorem~\ref{thrm:SympQuot}, which determines the $\Gamma$-Euler characteristic
of the linear symplectic quotient associated to a representation of an arbitrary compact Lie group $G$.


\section*{Acknowledgements}

This paper developed from H.M.'s Summer Research Fellowship project and continued research in
the Rhodes College Department of Mathematics and Statistics, and the
authors gratefully acknowledge the support of the department and college for these
activities.
This work was also partially supported by C.F.'s Simons Foundation Collaboration grants \#523991.
H.M. was supported by the Rhodes College Summer Fellowship Program and by the National Science Foundation under Grant \#2015553.
C.S. was supported by an AMS-Simons Research Enhancement Grant for PUI Faculty.
C.S. would like to thank Hans-Christian Herbig, with whom he learned many of the techniques used for
explicit descriptions of symplectic quotients in Section~\ref{sec:SympQuot}.


\section{Background}
\label{sec:background}

In this section, we briefly summarize the background and definitions that we will need. The reader is referred to
\cite[Sec.~2--3]{FarsiSeatonEC} for more details.

Throughout, fix an o-minimal structure \cite{vandenDriesBook} on $\R$ that contains the semialgebraic sets; the o-minimal structure of
semialgebraic sets will be sufficient for much of this paper, but we state some of the results of Section~\ref{sec:SimpCircleAction}
more generally. With the standard topology on $\R^n$, we say that a subspace $A\subseteq\R^n$ that is an element of this structure is
\emph{definable}, and a function $f\colon A\to\R^m$ is a \emph{definable function} if its graph
is a definable subset of $\R^{m+n}$. An \emph{affine definable space} $X$ is a topological space $X$ and a topological embedding
$\iota_X\colon X\to\R^n$ such that $\iota_X(X)$ is a definable set. A subset $A\subset X$ is \emph{definable} if $\iota_X(A)$ is a
definable subset of $\R^n$, and a morphism of affine definable spaces $(X,\iota_X)$ and $(Y,\iota_Y)$ is a continuous function
$f\colon X\to Y$ such that $\iota_Y\circ f\circ \iota_X^{-1}$ is a definable function on $\iota_X(X)$. Every definable subset
$A\subseteq\R^n$ admits a cell decomposition \cite[Ch.~3, (2.11)]{vandenDriesBook}, and the \emph{Euler characteristic} $\chi(A)$
of $A$ is defined to be the sum of the Euler characteristics of its cells \cite[Ch.~4, Sec.~2]{vandenDriesBook}.
If $(X,\iota_X)$ is an affine definable space and $A\subseteq X$ is definable, then the Euler characteristic $\chi(A)$ of $A$ is the Euler
characteristic of $\iota_X(A)$. By \cite[Thrm.~2.2]{Beke}, the Euler characteristic $\chi(A)$ depends only on the underlying topological
space $A$.

For a groupoid $\G_1\rightrightarrows\G_0$, we denote the structure functions as source $s\colon\G_1\to\G_0$,
target $t\colon\G_1\to\G_0$, multiplication $m\colon\G_1\ftimes{s}{t}\G_1\to\G_1$, unit $u\colon\G_0\to\G_1$, and inverse
$i\colon\G_1\to\G_1$. For $x\in\G_0$, we let $\G_x^x$ denote the isotropy group, $\G_x^x := (s,t)^{-1}(x,x)$, and $\G x$
the orbit, $\G x := \{y\in\G_0 : \exists g\in (s,t)^{-1}(x,y)\}$.
A \emph{topological groupoid} is a groupoid such that $\G_0$ and $\G_1$ are topological spaces and
the structure functions are continuous; we will also assume that $\G_0$ and $\G_1$ are Hausdorff and paracompact and
$s$ is an open map. We let $\vert\G\vert$ denote the orbit space and $\pi\colon\G_0\to\vert\G\vert$ the orbit map, which
is a quotient map as $s$ is open. A topological groupoid is \emph{orbit space definable} if it is equipped with a map
$\iota_{\vert\G\vert}\colon\vert\G\vert\to\R^n$ such that $(\vert\G\vert,\iota_{\vert\G\vert})$ is an affine definable space,
$\G_x^x$ is a compact Lie group for each $x\in\G_0$, and the partition of $\vert\G\vert$ into sets of orbits of points
with isomorphic isotropy groups is a finite partition into definable subsets.
Following \cite[Def.~5.6]{PflaumPosthumaTang}, we refer to the elements of this partition of
$\vert\G\vert$ as \emph{weak orbit types}; i.e., the orbits $\G x$ and $\G y$ of respective points $x,y\in\G_0$ are in the
same weak orbit type if $\G_x^x\simeq\G_y^y$ as compact Lie groups.

Let us describe an important class of examples that will be the focus of much of this paper; we summarize the relevant definitions
and results here and refer the reader to \cite[Sec.~2.2]{FarsiSeatonEC} and \cite{ChoiParkSuhSemialgSlices,ParkSuhLinEmbed} for more details and references.
Let $G$ be a compact Lie group. Then $G$ has a faithful representation and hence is isomorphic to a compact linear algebraic group.
This identifies $G$ with a \emph{semialgebraic group}, a semialgebraic subset of some Euclidean space that is a topological group
whose multiplication and inverse maps are definable functions in the o-minimal structure of semialgebraic sets. It is moreover
a \emph{semialgebraic linear group}, i.e., is semialgebraically isomorphic to a semialgebraic subgroup of $\GL_m(\R)$. The corresponding
semialgebraic linear group is unique and hence gives $G$ the unique structure of an \emph{affine definable topological group} with
respect to any o-minimal structure that contains the semialgebraic sets. That is, with respect to the above embedding of $G$
into $\GL_m(\R)\subset\R^{m^2}$, the multiplication and inverse maps are morphisms of the affine definable spaces $G\times G\to G$ and $G\to G$,
respectively. An \emph{affine definable $G$-space $X$} is then an affine definable space $(X,\iota_X)$ where $X$ is a topological
$G$-space and the action map $G\times X\to X$ is a morphism of affine definable spaces; we then say that the $G$-action is a
\emph{definable $G$-action}. It follows that $X$ admits a \emph{definable proper quotient}, i.e., $G\backslash X$ admits a unique
structure, up to definable homeomorphism, as an affine definable space with respect to which the orbit map $X\to G\backslash X$
is a morphism of affine definable spaces. If the o-minimal structure is that of semialgebraic sets and $X\subseteq\R^n$ is semialgebraic,
i.e., $\iota_X$ is the identity, $X$ is called a \emph{semialgebraic $G$-set}. In this case, the translation groupoid $G\ltimes X$
is an orbit space definable groupoid by \cite[Cor.~3.6]{FarsiSeatonEC}. In particular, if $V$ is a finite-dimensional linear representation
of $G$ and $X$ is a semialgebraic subset of $V$, then $G\ltimes X$ is an orbit space definable groupoid.

If $\G$ is an orbit space definable groupoid and $\Gamma$ is a finitely presented discrete group, the \emph{$\Gamma$-Euler characteristic of $\G$}
\cite[Def.~3.8]{FarsiSeatonEC} is
\begin{equation}
\label{eq:GamEC}
    \chi_\Gamma(\G) = \int_{\vert\G\vert} \chi\big(\G_x^x\backslash\HOM(\Gamma,\G_x^x)\big)\, d\chi(\G x),
\end{equation}
where $\HOM(\Gamma,\G_x^x)$ is the space of group homomorphisms from $\Gamma$ to the isotropy group $\G_x^x$ with the compact-open topology,
the integral is that with respect to the Euler characteristic \cite{CurryGhristEC,ViroEC}, and the action of
$\G_x^x$ on $\HOM(\Gamma,\G_x^x)$ is by pointwise conjugation. Because the integrand
$\chi\big(\G_x^x\backslash\HOM(\Gamma,\G_x^x)\big)$ depends only on the isomorphism class of $\G_x^x$, this can be reformulated
as follows. Let
\[
    \vert\G\vert = \vert\G\vert_1\sqcup\vert\G\vert_2\sqcup\cdots\sqcup\vert\G\vert_r
\]
denote the partition of the orbit space into weak orbit types, i.e., orbits of points with isomorphic isotropy groups.
For each $i$, let $G_i$ denote the isotropy group of a choice of point whose orbit is in $\vert\G\vert_i$. Then
\begin{equation}
\label{eq:GamECAsSumIsotropy}
    \chi_\Gamma(\G) = \sum\limits_{i=1}^r \chi\big(\vert\G\vert_i\big) \chi\big(G_i\backslash\HOM(\Gamma,G_i)\big).
\end{equation}
In the same way, if
\[
    \vert\G\vert = X_1\sqcup X_2\sqcup\cdots\sqcup X_s
\]
is a partition of $\vert\G\vert$ into definable subsets that are each contained in a weak orbit type and $G_i$ denotes a choice of
isotropy group of a point whose orbit is in $X_i$, then
\begin{equation}
\label{eq:GamECAsSumFinerPartition}
    \chi_\Gamma(\G) = \sum\limits_{i=1}^s \chi(X_i) \chi\big(G_i\backslash\HOM(\Gamma,G_i)\big).
\end{equation}

Finally, let us fix some notation that is used throughout this paper. If $G$ is a compact Lie group, $X$ is a $G$-space,
and $H\leq G$, we let $(H)$ denote the $G$-conjugacy class of $H$ in $G$ and define
$X^H := \{ x\in X : hx = x \; \forall h \in H\}$, the set of points fixed by $H$;
$X_H := \{ x\in X : G_x = H \}$, points with isotropy group $H$; and
$X_{(H)} := \{ x\in X : G_x \in (H) \}$, points with isotropy group conjugate to $H$.
The set $X_{(H)}$ is called the \emph{orbit type} associated to the conjugacy class $(H)$. Because $X_{(H)}$ is $G$-invariant,
we will use the term ``partition into orbit types" to describe the partition of $X$ as well as the induced partition of the orbit space $G\backslash X$.
Note that if $G\ltimes X$ is an orbit space definable groupoid, then an orbit type of $\vert G\ltimes X\vert = G\backslash X$ is
obviously contained in a weak orbit type.

For a topological space $X$, we let $\HOMEO(X)$ denote the group of homeomorphisms $X\to X$.
If $\rho\colon G\to\GL(V)$ is a linear representation of $G$, $g\in G$, and $W\subseteq V$ is a $\rho(g)$-invariant subspace, we use the
notation $\rho(g)_{|W}$ to denote the restriction of $\rho(g)$ to $W$. If $W$ is $\rho(g)$-invariant for all $g\in G$, we use
$\rho_{|W}$ to denote the resulting restricted representation $G\to\GL(W)$ defined by $g\mapsto\rho(g)_{|W}$.
If $X$ is a $\rho(g)$-invariant subset of $V$, we also
use $\rho(g)_{|X}$ to denote the restriction of $\rho(g)$ to $X$ and,
if $X$ is $\rho(g)$-invariant for all $G$, $\rho_{|X}$
to denote the map $G\to\HOMEO(X)$ given by $g\mapsto\rho(g)_{|X}$.
For a positive integer $n$, we let $\underline{n} = \{1, 2, \ldots, n\}$,
$R(n)$ denote the group of $n$th roots of unity, $D_{2n}$ denote the dihedral group with $2n$ elements,
and $\F_n$ denote the free group with $n$ generators.


\section{Localization to the fixed points of a torus action}
\label{sec:SimpCircleAction}

An important tool we will use throughout this paper is the simplification of Euler characteristic computations in the presence of circle actions.
This is a consequence of the following localization theorem for the Euler characteristic, which is well-known in the context of torus actions on
manifolds; see \cite[Remark on p.~64]{KobayashiFixed} and \cite[Thrm.~9.3]{GoertschesZoller}. In this section, we will prove this result and
then elaborate consequences for the computation of $\chi_\Gamma$ in specific circumstances.

\begin{theorem}
\label{thrm:TorusLocalization}
Let $T = (\Sp^1)^\ell$ be a torus and let $X$ be an affine definable space that admits a definable $T$-action.
Then $\chi(X) = \chi(X^T)$. In particular, $\chi(X) = 0 $ if $X^T = \emptyset$.
\end{theorem}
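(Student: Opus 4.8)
The plan is to reduce to the case of a single circle by induction on $\ell$, and to handle the base case $\ell = 1$ by a careful analysis of the orbit space of an $\Sp^1$-action on a definable set. First, suppose the result is known for $\ell = 1$. Writing $T = \Sp^1 \times T'$ with $T' = (\Sp^1)^{\ell-1}$, the restriction of the $T$-action to the $\Sp^1$-factor is a definable $\Sp^1$-action on $X$, so $\chi(X) = \chi(X^{\Sp^1})$. Since $T'$ commutes with $\Sp^1$, the subspace $X^{\Sp^1}$ is $T'$-invariant, and it is definable (being cut out by the definable conditions $tx = x$ for a topological generator, or more carefully by a finite intersection of fixed-point sets — I would use that $X^{\Sp^1}$ is closed and definable since the action map is a morphism of affine definable spaces). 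The $T'$-action on $X^{\Sp^1}$ is again definable, so by the inductive hypothesis $\chi(X^{\Sp^1}) = \chi\big((X^{\Sp^1})^{T'}\big) = \chi(X^T)$, completing the induction. The second sentence of the theorem is immediate: if $X^T = \emptyset$ then $\chi(X) = \chi(\emptyset) = 0$.

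The substance is therefore the case $T = \Sp^1$. Here I would argue as follows. The orbit map $\pi\colon X \to \Sp^1\backslash X$ exhibits $\Sp^1\backslash X$ as an affine definable space (using the existence of a definable proper quotient, as recalled in Section~\ref{sec:background}), and the partition of $X$ into orbit types $X_{(H)}$, with $H$ a closed subgroup of $\Sp^1$, is a finite partition into definable $\Sp^1$-invariant subsets. The closed subgroups of $\Sp^1$ are exactly $\Sp^1$ itself and the finite cyclic groups $R(n)$ for $n \geq 1$; the orbit type $X_{(\Sp^1)} = X^{\Sp^1}$ consists of fixed points, while for every proper closed subgroup the orbits are circles. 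By finite additivity of $\chi$ over the definable partition $X = X^{\Sp^1} \sqcup \bigsqcup_{n} X_{(R(n))}$ (the union being finite), and since $\chi$ of the orbit space is computed by the same partition downstairs, it suffices to show that $\chi(X_{(R(n))}) = 0$ for each $n$ with $R(n) \neq \Sp^1$, equivalently that the Euler characteristic of any definable set swept out by genuine circle orbits vanishes. For this I would use the definable (semialgebraic-type) slice theorem / local triviality of the orbit map on each orbit type: $X_{(R(n))} \to \Sp^1\backslash X_{(R(n))}$ is a definable fiber bundle with fiber the orbit $\Sp^1/R(n) \cong \Sp^1$, and since $\chi(\Sp^1) = 0$, the multiplicativity of the Euler characteristic over definable fibrations (a standard o-minimal fact, e.g. via definable trivialization over a finite cell decomposition of the base) gives $\chi(X_{(R(n))}) = \chi(\Sp^1)\cdot\chi\big(\Sp^1\backslash X_{(R(n))}\big) = 0$.

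Assembling, $\chi(X) = \chi(X^{\Sp^1}) + \sum_n \chi(X_{(R(n))}) = \chi(X^{\Sp^1})$, which is the $\ell=1$ case. The main obstacle I anticipate is the justification that the orbit map restricted to a fixed orbit type is a definable fiber bundle with circle fibers, and that $\chi$ is multiplicative over such bundles in the o-minimal setting; I would cite the definable slice theorem for compact group actions (\cite{ChoiParkSuhSemialgSlices}) together with the definable triviality theorem (Hardt's theorem / generic triviality over a cell decomposition) to reduce the bundle to a finite disjoint union of products $C \times \Sp^1$ over cells $C$, where multiplicativity is trivial and $\chi(\Sp^1) = 0$ kills each term. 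A secondary technical point is verifying that $X^{\Sp^1}$ is definable and that the inductive step's intersection of fixed-point sets behaves well; this follows because the action being a morphism of affine definable spaces makes $\{(g,x) : gx = x\}$ definable, hence so is its slice over any definable (in particular, finite) subset of $T$ that topologically generates the relevant subtorus.
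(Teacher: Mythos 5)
Your proof is correct in its essentials and rests on the same mechanism as the paper's: the o-minimal trivialization theorem applied to the orbit map, together with the vanishing of $\chi$ for a circle (or positive-dimensional torus) fiber. The differences are organizational, and one of them matters. The paper treats the full torus $T$ at once: it trivializes $\pi\colon X\to T\backslash X$ over a finite definable partition $Y_1\sqcup\cdots\sqcup Y_r$ of the quotient, identifies each fiber with $T_i\backslash T$ for an isotropy group $T_i$, and observes that this is either a point or a nontrivial torus with $\chi=0$; no induction on $\ell$ and no orbit-type decomposition upstairs are needed. Your induction on $\ell$ is harmless (and your check that $X^{\Sp^1}$ is definable and $T'$-invariant is fine), but your route through the partition of $X$ into orbit types $X_{(R(n))}$ carries a burden you do not discharge: in an arbitrary o-minimal structure containing the semialgebraic sets it is not immediate that only finitely many orbit types occur on a general affine definable $\Sp^1$-space, and the semialgebraic slice theorem you cite does not directly apply in that generality. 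This is fixable (e.g., uniform finiteness for the definable family of isotropy groups $\{(g,x):gx=x\}$ bounds the orders of the finite isotropy groups, so only finitely many $X_{(R(n))}$ are nonempty and each is definable), but it is cleaner to apply the trivialization theorem to the orbit map directly, as the paper does: the resulting finite definable partition of $T\backslash X$ already separates the point-fibers from the circle-fibers, and finite additivity plus $\chi(\Sp^1)=0$ finishes the argument without ever invoking the orbit-type stratification.
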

\begin{proof}
As $T$ is compact, $X$ admits a definably proper quotient $\pi\colon X\to T\backslash X$ so that $T\backslash X$ is an affine definable
space and $\pi$ is a morphism of affine definable spaces; see \cite[pp.2348--9]{FarsiSeatonEC}.
By the trivialization theorem \cite[Ch.~9 (1.7)]{vandenDriesBook}, there is a partition $T\backslash X = Y_1\sqcup\cdots\sqcup Y_r$ of
$T\backslash X$ into definable subsets such that $\pi^{-1}(Y_i)$ is definably homeomorphic to $F_i\times Y_i$ for a definable set $F_i$
that is itself definably homeomorphic to $\pi^{-1}(x)$ for each $x\in Y_i$. Reindex the $Y_i$ if necessary so that $F_i$ is a single point
for $i\leq k$ and not a singleton for $i > k$, where we may have $k = 0$ or $k = r$. As the action map $T\times X\to X$ induces for each $x\in X$
a definable homeomorphism of $T_x\backslash T$ onto the orbit $\pi^{-1}(x)$ of $x$ in $X$, it follows that each $F_i$ is definably homeomorphic to
$T_i\backslash T$ where $T_i$ is the isotropy group of a point $x\in \pi^{-1}(Y_i)$. Therefore, $T_i = T$ for $i\leq k$ and $T_i \lneq T$ for
$i > k$; that is, $X^T = \bigsqcup_{i=1}^k \pi^{-1}(Y_i)$. If $i > k$, then as $F_i\simeq T_i\backslash T$ is a positive-dimensional compact connected
abelian Lie group and hence a nontrivial torus, we have by the multiplicativity of $\chi$ \cite[Ch.~4, Cor.~(2.11)]{vandenDriesBook} that
$\chi\big(\pi^{-1}(Y_i)\big) = \chi(F_i\times Y_i) = 0$. It follows that
\begin{align*}
    \chi(X)     &=      \sum\limits_{i=1}^r \chi\big(\pi^{-1}(Y_i)\big)
                \\&=    \sum\limits_{i=1}^k \chi\big(\pi^{-1}(Y_i)\big)
                \\&=    \chi(X^T).
    \qedhere
\end{align*}
\end{proof}

Combining Theorem~\ref{thrm:TorusLocalization} with Equation~\eqref{eq:GamECAsSumFinerPartition} yields the following.

\begin{corollary}
\label{cor:SimpTorusActionOSDGroupoid}
Let $\G$ be an orbit space definable groupoid, let
\[
    \vert\G\vert = X_1\sqcup X_2\sqcup\cdots\sqcup X_s
\]
be a partition of $\vert\G\vert$ into definable sets such that each $X_i$ is contained in a weak orbit type, and let $G_i$ be the isotropy
group of a point whose orbit is in $X_i$. Let $T_i$ be a (possibly trivial) torus for each $i$, suppose that each $X_i$ admits a definable
$T_i$-action, and let $I = \{ i : X_i^{T_i} \neq \emptyset\}$. Then
\begin{equation}
\label{eq:SimpCircleActionOSDGroupoid}
    \chi_\Gamma(\G) = \sum\limits_{i\in I} \chi(X_i^{T_i}) \chi\big(G_i\backslash\HOM(\Gamma,G_i)\big).
\end{equation}
\end{corollary}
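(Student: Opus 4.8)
The plan is to derive this directly from Equation~\eqref{eq:GamECAsSumFinerPartition} together with Theorem~\ref{thrm:TorusLocalization}, so the argument should be quite short. First I would apply Equation~\eqref{eq:GamECAsSumFinerPartition} to the given partition $\vert\G\vert = X_1\sqcup\cdots\sqcup X_s$ into definable sets, each contained in a weak orbit type, with $G_i$ the chosen isotropy group associated to $X_i$. These are precisely the hypotheses under which Equation~\eqref{eq:GamECAsSumFinerPartition} was stated, so it yields
\[
    \chi_\Gamma(\G) = \sum_{i=1}^s \chi(X_i)\,\chi\bigl(G_i\backslash\HOM(\Gamma,G_i)\bigr),
\]
and it then remains only to rewrite each factor $\chi(X_i)$.

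Next I would observe that each $X_i$, being a definable subset of the affine definable space $\vert\G\vert$, is itself an affine definable space via the restricted embedding, and by hypothesis it carries a definable $T_i$-action. Hence Theorem~\ref{thrm:TorusLocalization} applies to $X_i$ and gives $\chi(X_i) = \chi(X_i^{T_i})$, where $X_i^{T_i}$ is again a definable subset (as is shown in the course of the proof of Theorem~\ref{thrm:TorusLocalization}, where the fixed locus is exhibited as a finite union of preimages of definable sets under the orbit map). For indices $i\notin I$ we have $X_i^{T_i} = \emptyset$, so $\chi(X_i) = \chi(X_i^{T_i}) = 0$ and these terms drop out of the sum. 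Substituting $\chi(X_i) = \chi(X_i^{T_i})$ for the remaining indices $i\in I$ then produces Equation~\eqref{eq:SimpCircleActionOSDGroupoid}.

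I do not expect a genuine obstacle here; the only point warranting a moment's care is confirming that Theorem~\ref{thrm:TorusLocalization} may be invoked on each piece $X_i$ of the partition separately rather than only on $\vert\G\vert$ as a whole, i.e., that a definable subset of $\vert\G\vert$ equipped with a definable $T_i$-action is again an affine definable space with a definable torus action. This is immediate from the notions of definable subset and definable action recalled in Section~\ref{sec:background}. The trivial-torus case $T_i = \{e\}$ (for which $X_i^{T_i} = X_i$, so $i\in I$ unless $X_i = \emptyset$) is absorbed automatically, since Theorem~\ref{thrm:TorusLocalization} is then a tautology and the corresponding term is unchanged.
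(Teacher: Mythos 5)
Your proposal is correct and is exactly the argument the paper intends: the corollary is stated with no further proof beyond the remark that it follows by ``combining Theorem~\ref{thrm:TorusLocalization} with Equation~\eqref{eq:GamECAsSumFinerPartition},'' which is precisely your two-step substitution. The point you flag --- that each $X_i$ is itself an affine definable space with a definable $T_i$-action, so the localization theorem applies piecewise --- is the only detail needing mention, and you handle it correctly.
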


In the sequel, we will primarily apply Theorem~\ref{thrm:TorusLocalization} in situations where $T = \Sp^1$, and it will be convenient to state a few
specializations to this case. As the only proper closed subgroups of $\Sp^1$ are finite, we have the following.

\begin{corollary}
\label{cor:S1ActionMain}
Let $X$ be an affine definable space that admits a definable $\Sp^1$-action such that the isotropy group of each $x\in X$ is finite.
Then $\chi(X) = 0$.
\end{corollary}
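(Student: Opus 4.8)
The plan is to apply Theorem~\ref{thrm:TorusLocalization} directly with $\ell = 1$ and $T = \Sp^1$. The one thing to verify is that the hypothesis on isotropy groups forces $X^{\Sp^1} = \emptyset$. By definition a point $x \in X$ belongs to $X^{\Sp^1}$ exactly when $gx = x$ for every $g \in \Sp^1$, i.e.\ exactly when the isotropy group $\Sp^1_x$ equals all of $\Sp^1$. Since $\Sp^1$ is infinite while each isotropy group $\Sp^1_x$ is assumed finite, no such $x$ exists, and hence $X^{\Sp^1} = \emptyset$.

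It then follows from Theorem~\ref{thrm:TorusLocalization} that $\chi(X) = \chi(X^{\Sp^1}) = \chi(\emptyset) = 0$, which is the assertion. I do not expect any genuine obstacle here: once the localization theorem is available, the argument is entirely formal, and the only input specific to the circle is that its only positive-dimensional closed subgroup is $\Sp^1$ itself (equivalently, all proper closed subgroups of $\Sp^1$ are finite, as noted just before the statement), so that ``every isotropy group is finite'' is equivalent to ``the $\Sp^1$-fixed-point set is empty.''
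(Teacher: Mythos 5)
Your proof is correct and matches the paper's intended argument exactly: the corollary is stated immediately after Theorem~\ref{thrm:TorusLocalization} precisely because finite isotropy at every point forces $X^{\Sp^1}=\emptyset$, whence $\chi(X)=\chi(X^{\Sp^1})=0$. Nothing further is needed.
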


We will need the following specific application of Corollary~\ref{cor:S1ActionMain}.

\begin{corollary}
\label{cor:SimpCircleActionRepMulti}
Let $G$ be a compact Lie group and $\rho\colon G\to\GL(V)$ a finite-dimensional linear $G$-representation.
Suppose $X \subseteq V$ is a $G$-invariant definable subset, $H$ is the isotropy group of a point in $X$, and $Z\subset X_{(H)}$ is
$G$-invariant and definable. Let $Y = Z\cap X_H$.
Suppose further that $T \leq C_{\HOMEO(Y)}\Big(\rho\big(N_G(H)\big)_{|Y}\Big)$ is a subgroup of the centralizer of $\rho\big(N_G(H)\big)_{|Y}$
in $\HOMEO(Y)$ such that $T\simeq\Sp^1$, $T$ acts on $Y$ with finite isotropy, and the intersection of $T$ and $\rho\big(N_G(H)\big)_{|Y}$ in
$\HOMEO(Y)$ is finite. Then
\[
    \chi_\Gamma(G\ltimes X) =   \chi_\Gamma\Big( G\ltimes \big(X\smallsetminus Z\big)\Big).
\]
\end{corollary}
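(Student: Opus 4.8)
The plan is to reduce the computation of $\chi_\Gamma(G\ltimes X)$ via the additivity formula~\eqref{eq:GamECAsSumFinerPartition} to a partition that isolates the piece $Z$, and then show that the contribution of $Z$ vanishes because of the circle action $T$ on $Y = Z\cap X_H$. First I would write $X = (X\smallsetminus Z)\sqcup Z$, a partition into $G$-invariant definable subsets, so that on the level of orbit spaces $\vert G\ltimes X\vert = \vert G\ltimes(X\smallsetminus Z)\vert\sqcup\vert G\ltimes Z\vert$, and by the additivity of $\chi_\Gamma$ over disjoint unions of orbit spaces (equivalently, by applying~\eqref{eq:GamECAsSumFinerPartition} to a common refinement) we get
\[
    \chi_\Gamma(G\ltimes X) = \chi_\Gamma\big(G\ltimes(X\smallsetminus Z)\big) + \chi_\Gamma(G\ltimes Z).
\]
So it suffices to prove $\chi_\Gamma(G\ltimes Z) = 0$.

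Next I would analyze $\chi_\Gamma(G\ltimes Z)$ using~\eqref{eq:GamECAsSumFinerPartition}. Since $Z\subseteq X_{(H)}$, every point of $Z$ has isotropy group conjugate to $H$, so $Z$ lies in a single weak orbit type and the formula gives $\chi_\Gamma(G\ltimes Z) = \chi\big(\vert G\ltimes Z\vert\big)\,\chi\big(H\backslash\HOM(\Gamma,H)\big)$. Thus it is enough to show $\chi\big(\vert G\ltimes Z\vert\big) = 0$, i.e. that the orbit space of $Z$ has vanishing Euler characteristic. The standard slice-theoretic identification of orbit spaces of orbit-type strata should give a definable homeomorphism $\vert G\ltimes Z\vert = G\backslash Z \cong \big(N_G(H)/H\big)\backslash Y$, where $Y = Z\cap X_H$ is the fixed-point set carved out of $Z$; here one uses that $Z$ is $G$-invariant with constant orbit type $(H)$, that $Y = Z^H\cap X_H$ is $N_G(H)$-invariant with $H$ acting trivially, and that the inclusion $Y\hookrightarrow Z$ induces a definable homeomorphism on quotients. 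The $N_G(H)/H$-action here is the one induced by $\rho\big(N_G(H)\big)_{|Y}$.

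Now the circle comes in. By hypothesis $T\simeq\Sp^1$ sits inside $C_{\HOMEO(Y)}\big(\rho(N_G(H))_{|Y}\big)$, so the $T$-action on $Y$ commutes with the $N_G(H)/H$-action and hence descends to a definable $\Sp^1$-action on the quotient $\big(N_G(H)/H\big)\backslash Y$. (One must check definability: the $T$-action, the $N_G(H)/H$-action, and the orbit map are all definable, so the induced action on the quotient is definable.) The key point is that this induced action has finite isotropy everywhere: the isotropy of a point $\overline{y}$ in the quotient is the image in $\Sp^1$ of $\{t\in T : ty\in \rho(N_G(H))_{|Y}\cdot y\}$, and since $T$ acts on $Y$ with finite isotropy and $T\cap\rho(N_G(H))_{|Y}$ is finite in $\HOMEO(Y)$, this set is finite. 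Having produced a definable $\Sp^1$-action with finite isotropy on the affine definable space $\big(N_G(H)/H\big)\backslash Y\cong\vert G\ltimes Z\vert$, Corollary~\ref{cor:S1ActionMain} gives $\chi\big(\vert G\ltimes Z\vert\big) = 0$, completing the proof. The main obstacle I expect is the careful verification, in the definable category, that the slice identification $G\backslash Z\cong\big(N_G(H)/H\big)\backslash Y$ holds and that the commuting $T$-action genuinely descends to a \emph{definable} action with \emph{finite} isotropy on the quotient; the underlying topology is classical, but making everything definable and tracking the isotropy groups through the two successive quotients is where the real work lies.
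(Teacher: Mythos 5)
Your proposal is correct and takes essentially the same route as the paper: identify $G\backslash Z$ with $N_G(H)\backslash Y$ via the standard orbit-type homeomorphism, descend the commuting $T$-action to this quotient, use the finite-intersection and finite-isotropy hypotheses to see the descended $\Sp^1$-action has finite isotropy, and conclude via Corollary~\ref{cor:S1ActionMain} that $Z$ contributes nothing, so additivity of $\chi_\Gamma$ gives the claim.
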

\begin{proof}
The orbit type in $\vert G\ltimes X\vert$ associated to $H$ is given by $G\backslash X_{(H)}$, which is homeomorphic to
$N_G(H)\backslash X_H$ by \cite[Thrm.~4.3.10]{PflaumBook}. Restricting this homeomorphism to $G\backslash Z$ yields a homeomorphism to $N_G(H)\backslash Y$.
As $T$ commutes with $\rho\big(N_G(H)\big)_{|Y}$, the $T$-action descends to a definable action on $N_G(H)\backslash Y$, and the fact
that $T$ has finite intersection with $\rho\big(N_G(H)\big)_{|Y}$ in $\HOMEO(Y)$ implies that the $T$-action on $N_G(H)\backslash Y$
has finite isotropy.
\end{proof}

Continuing to let $V$ denote a linear $G$-representation,
let $D = \{ e^{\sqrt{-1}\,\theta}I\in\GL(V):\theta\in\R\}\simeq\Sp^1$
denote the subgroup of $\GL(V)$ that acts on $V$ as scalar multiplication by $e^{\sqrt{-1}\,\theta}$. Because $D$ is in the center of
$\GL(V)$, for each isotropy group $H$ of the $G$-action on $V$, we have that $V_H$, $V_{(H)}$, and $V^H$ are $D$-invariant.
Then as $D$ fixes no points in $V$ except the origin, we have the following.

\begin{corollary}
\label{cor:SimpCircleActionDiag}
Let $G$ be a compact Lie group, $\rho\colon G\to\GL(V)$ a finite-dimensional linear $G$-representation, and let
$D = \{ e^{\sqrt{-1}\,\theta}I\in\GL(V):\theta\in\R\}\simeq\Sp^1$
denote the subgroup of $\GL(V)$ that acts on $V$ as scalar multiplication by $e^{\sqrt{-1}\,\theta}$. Let
$X \subseteq V$ be $G$-invariant, $D$-invariant, and definable; let
\[
    G\backslash X = (G\backslash X)_1 \sqcup (G\backslash X)_2 \sqcup\cdots\sqcup (G\backslash X)_s
\]
denote the partition of $G\backslash X$ into orbit types; and for each $i$, let $H_i$ denote the isotropy group of a choice of point
with orbit in $(G\backslash X)_i$. For a fixed $j$, suppose $0\notin X_{H_j}$ and the intersection of $D_{|X_{H_j}}$ with
$\rho\big(N_G(H_j)\big)_{|X_{H_j}}$ in $\HOMEO(X_{H_j})$ is finite.
Then for any finitely presented $\Gamma$, $(G\backslash X)_j$ does not contribute to the $\Gamma$-Euler characteristic of $G\ltimes X$, i.e.,
\[
    \chi_\Gamma(G\ltimes X) =   \sum\limits_{\substack{i=1\\ i\neq j}}^s
                                \chi\big( (G\backslash X)_i\big) \chi\big(G_i\backslash\HOM(\Gamma,G_i)\big).
\]
\end{corollary}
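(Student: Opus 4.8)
The plan is to realize this as a special case of Corollary~\ref{cor:SimpCircleActionRepMulti}. Assuming $(G\backslash X)_j\neq\emptyset$ (otherwise $\chi\big((G\backslash X)_j\big)=0$ and the two sides agree trivially), set $H=H_j$, which is then an honest isotropy group of the $G$-action on $X$. Take $Z=X_{(H)}$ to be the full orbit type in $X$ determined by $(H)$, and take $T=D_{|X_H}$. Since the orbit map $\pi\colon X\to G\backslash X$ is a morphism of affine definable spaces and $Z=\pi^{-1}\big((G\backslash X)_j\big)$, the set $Z$ is $G$-invariant and definable; moreover $Y:=Z\cap X_H=X_{(H)}\cap X_H=X_H$ because $X_H\subseteq X_{(H)}$.

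Next I would verify the hypotheses of Corollary~\ref{cor:SimpCircleActionRepMulti} for this data. Because $D$ is central in $\GL(V)$, for $d\in D$ and $v\in V_H$ we have $G_{dv}=G_v=H$, so $D$ preserves $V_H$; combined with the $D$-invariance of $X$ this shows $D$ preserves $X_H=X\cap V_H$. Centrality of $D$ also makes $D_{|X_H}$ commute with $\rho\big(N_G(H)\big)_{|X_H}$ inside $\HOMEO(X_H)$, so $T\leq C_{\HOMEO(X_H)}\big(\rho(N_G(H))_{|X_H}\big)$. Since $0\notin X_H$ and $D$ acts by scalar multiplication, $e^{\sqrt{-1}\,\theta}v=v$ for a nonzero $v\in X_H$ forces $\theta\in 2\pi\Z$; hence $D_{|X_H}\simeq\Sp^1$ and every point of $X_H$ has trivial, in particular finite, $D$-isotropy. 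The remaining hypothesis, that $D_{|X_H}$ meets $\rho\big(N_G(H)\big)_{|X_H}$ in a finite subgroup of $\HOMEO(X_H)$, is precisely the standing assumption on $j$.

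With the hypotheses in hand, Corollary~\ref{cor:SimpCircleActionRepMulti} gives $\chi_\Gamma(G\ltimes X)=\chi_\Gamma\big(G\ltimes(X\smallsetminus X_{(H)})\big)$. Now $X\smallsetminus X_{(H)}$ is $G$-invariant and definable, and its orbit space is $\bigsqcup_{i\neq j}(G\backslash X)_i$, a partition into orbit types each contained in a single weak orbit type. Applying Equation~\eqref{eq:GamECAsSumFinerPartition} to this partition yields $\sum_{i\neq j}\chi\big((G\backslash X)_i\big)\chi\big(G_i\backslash\HOM(\Gamma,G_i)\big)$, which is the claimed identity.

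The argument is essentially bookkeeping, and I expect no substantive obstacle. The two points that require care are (i) confirming that $D$ descends to a genuine $\Sp^1$-action on $X_H$ with finite isotropy, which is exactly where the hypotheses $0\notin X_H$ and the finiteness of $D_{|X_H}\cap\rho(N_G(H))_{|X_H}$ are used, and (ii) checking that deleting the single orbit type $X_{(H)}$ leaves a $G$-invariant definable set whose orbit-type decomposition is the obvious one, so that Equation~\eqref{eq:GamECAsSumFinerPartition} applies verbatim.
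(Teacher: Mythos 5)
Your proposal is correct and matches the paper's intended argument: the paper states this corollary immediately after observing that $D$ is central (so $V_{H}$, $V_{(H)}$ are $D$-invariant and $D_{|X_{H_j}}$ commutes with $\rho(N_G(H_j))_{|X_{H_j}}$) and fixes only the origin, i.e., it is derived from Corollary~\ref{cor:SimpCircleActionRepMulti} with $Z = X_{(H_j)}$ and $T = D_{|X_{H_j}}$, followed by Equation~\eqref{eq:GamECAsSumFinerPartition} on the remaining orbit types, exactly as you do. Your write-up simply makes the hypothesis-checking explicit.
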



\section{Computations for $\Sp^1$-representations}
\label{sec:CircleReps}

In this section, we compute the $\Gamma$-Euler characteristics of translation groupoids associated to linear representations of the
circle. We additionally consider some cases of invariant subsets of representations.

Let $V$ be a finite-dimensional hermitian vector space, let $\U(V)$ denote the group of unitary transformations $V\to V$, and let
$\rho\colon\Sp^1\to\U(V)$ be a unitary $\Sp^1$-representation.
Choosing a basis with respect to which the action is diagonal,
let $(x_1,\ldots,x_n)$ be coordinates for $V$ with respect to this basis, and then the action of $\Sp^1$ is described by a weight vector
$(a_1,\ldots,a_n)\in\Z^n$. Specifically, the action is given by
\[
    z(x_1,\ldots,x_n)   =   (z^{a_1}x_1,\ldots,z^{a_n}x_n),
    \qquad z\in\Sp^1,
    \quad (x_1,\ldots,x_n)\in V.
\]
The isotropy group of a point $(x_1,\ldots,x_n)\in V$ is determined by the coordinates $x_i$ that are nonzero, and the possible
isotropy groups are either $\Sp^1$ or the finite group $R(m)$ of $m$th roots of unity isomorphic to $\Z/m\Z$, $m \geq 1$.

Let $\underline{n} = \{1,2,\ldots,n\}$. We partition $V$ by defining for $I\subseteq\underline{n}$ the set
\[
    V_I = \{ (x_1,\ldots,x_n)\in V : x_i \neq 0 \:\forall i \in I \text{ and } x_i = 0:\forall i \notin I \}.
\]
We also define $\mathbf{a}_I = \{ a_i : i \in I\}$ to be the set of weights corresponding to elements of $I$ and set $\mathbf{a}_{\emptyset} = \{0\}$.
Then each $V_I$ is a subset of an orbit type in $V$ so that $\{ V_I : I\subseteq\underline{n}\}$ is a partition
subordinate to the partition of $V$ into orbit types. Specifically, the isotropy group of $V_I$ is given by
the group $R\big(\gcd(\mathbf{a}_I)\big)$ of $\gcd(\mathbf{a}_I)$th roots of unity, where in the case $\mathbf{a}_I = \{0\}$, we use the common
convention that $\gcd(\{0\}) = 0$ and adopt the convention that $R(0) = \Sp^1$. Note that the closure $V_I^\cl$ of $V_I$ is the coordinate
subspace of $V$ given by
\[
    V_I^\cl = \{ (x_1,\ldots,x_n)\in V : x_i = 0:\forall i \notin I \}.
\]
Note further that $V^{\Sp^1} = \bigsqcup_{I\subseteq\underline{n} \colon \mathbf{a}_I =\{0\}} V_I$.

\begin{theorem}
\label{thrm:GamECS1Rep}
Let $\rho\colon\Sp^1\to\U(V)$ be a unitary $\Sp^1$-representation with weight vector $\mathbf{a} = (a_1,\ldots,a_n)\in\Z^n$
and let $\Gamma$ be a finitely presented discrete group. Then
\[
    \chi_\Gamma(\Sp^1\ltimes V)
    =   \chi\big(\HOM(\Gamma,\Sp^1)\big)
        -\sum\limits_{\substack{i=1 \\ a_i\neq 0}}^n \chi\big(\HOM(\Gamma,\Z/a_i\Z)\big).
\]
\end{theorem}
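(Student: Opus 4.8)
The plan is to compute $\chi_\Gamma(\Sp^1\ltimes V)$ from the summation formula \eqref{eq:GamECAsSumFinerPartition} applied to the partition of $\Sp^1\backslash V$ induced by $V=\bigsqcup_{I\subseteq\underline{n}}V_I$. Each $V_I$ is $\Sp^1$-invariant and sits inside a single orbit type, with isotropy group $R(d_I)$ where $d_I:=\gcd(\mathbf{a}_I)$ and $R(0):=\Sp^1$; and each $\Sp^1\backslash V_I$ is definable in $\Sp^1\backslash V$, being the image of the semialgebraic set $V_I$ under the quotient morphism. Since $R(d_I)$ is abelian for every $I$ (including $I=\emptyset$, where $R(d_\emptyset)=\Sp^1$), conjugation acts trivially on $\HOM(\Gamma,R(d_I))$, so \eqref{eq:GamECAsSumFinerPartition} becomes
\[
    \chi_\Gamma(\Sp^1\ltimes V)=\sum_{I\subseteq\underline{n}}\chi\big(\Sp^1\backslash V_I\big)\,\chi\big(\HOM(\Gamma,R(d_I))\big),
\]
and it suffices to compute each $\chi(\Sp^1\backslash V_I)$ and determine which terms are nonzero.

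Next I would compute $\chi(\Sp^1\backslash V_I)$ using polar coordinates. For each $i$, the homeomorphism $\C^{\times}\to\R_{>0}\times\Sp^1$, $x\mapsto(|x|,x/|x|)$, is semialgebraic and carries the action $z\cdot x=z^{a_i}x$ to the action of $\Sp^1$ that is trivial on $\R_{>0}$ and is multiplication by $z^{a_i}$ on $\Sp^1$. Taking the product over $i\in I$ gives an $\Sp^1$-equivariant homeomorphism of $V_I$ with $(\R_{>0})^{|I|}\times\T^{|I|}$, where $\Sp^1$ acts trivially on the first factor and by $z\cdot(w_i)_{i\in I}=(z^{a_i}w_i)_{i\in I}$ on $\T^{|I|}$; hence $\Sp^1\backslash V_I$ is homeomorphic to $(\R_{>0})^{|I|}\times(\Sp^1\backslash\T^{|I|})$. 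The quotient $\Sp^1\backslash\T^{|I|}$ is a compact connected abelian Lie group, hence a torus, of dimension $|I|$ if $\mathbf{a}_I=\{0\}$ and of dimension $|I|-1$ if $\mathbf{a}_I\neq\{0\}$, since in the latter case $z\mapsto(z^{a_i})_{i\in I}$ has one-dimensional image; alternatively, for $|I|\geq2$ one obtains $\chi(\Sp^1\backslash V_I)=0$ directly from Corollary~\ref{cor:S1ActionMain} by letting a coordinate circle of $\T^{|I|}$ act on $\Sp^1\backslash V_I$. Since $\chi(\R_{>0})=-1$, $\chi(\T^0)=1$, and $\chi(\T^k)=0$ for $k\geq1$, this gives $\chi(\Sp^1\backslash V_I)=1$ when $I=\emptyset$, $\chi(\Sp^1\backslash V_I)=-1$ when $I=\{i\}$ with $a_i\neq0$, and $\chi(\Sp^1\backslash V_I)=0$ otherwise.

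Finally I would assemble the surviving terms. The term $I=\emptyset$ contributes $\chi\big(\HOM(\Gamma,\Sp^1)\big)$; for each $i$ with $a_i\neq0$ the term $I=\{i\}$ has $d_I=|a_i|$ and $R(|a_i|)\cong\Z/a_i\Z$, contributing $-\chi\big(\HOM(\Gamma,\Z/a_i\Z)\big)$; every other term vanishes. Summing yields
\[
    \chi_\Gamma(\Sp^1\ltimes V)=\chi\big(\HOM(\Gamma,\Sp^1)\big)-\sum\limits_{\substack{i=1\\ a_i\neq0}}^n\chi\big(\HOM(\Gamma,\Z/a_i\Z)\big),
\]
as asserted.

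I expect the main (and essentially only) obstacle to be the computation of $\chi(\Sp^1\backslash\T^{|I|})$, specifically the vanishing when $|I|\geq2$; this is handled by recognizing the quotient as a positive-dimensional torus, or alternatively by reducing to Corollary~\ref{cor:S1ActionMain} as indicated above. The bookkeeping steps — that each $\Sp^1\backslash V_I$ is a definable subset of $\Sp^1\backslash V$ and that each isotropy group is abelian — are immediate from the setup, so the argument is otherwise routine.
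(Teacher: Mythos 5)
Your proposal is correct, and its overall skeleton — the partition of $\Sp^1\backslash V$ into the sets $\Sp^1\backslash V_I$, the application of Equation~\eqref{eq:GamECAsSumFinerPartition}, and the identification of the surviving terms $I=\emptyset$ and $I=\{i\}$ with $a_i\neq 0$ — is exactly the paper's. Where you diverge is the key vanishing step for $|I|\geq 2$ with $\mathbf{a}_I\neq\{0\}$. The paper establishes $\chi(\Sp^1\backslash V_I)=0$ by invoking Corollary~\ref{cor:SimpCircleActionRepMulti} with an auxiliary commuting circle action on $V_I^\cl$, chosen with weight vector $(1,\ldots,1)$ when the weights $a_i$, $i\in I$, do not all coincide and $(1,\ldots,1,-1)$ when they do, so as to guarantee finite intersection with $\rho(\Sp^1)_{|V_I}$. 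You instead compute the quotient explicitly via the equivariant polar decomposition $V_I\simeq(\R_{>0})^{|I|}\times\T^{|I|}$, identify $\Sp^1\backslash\T^{|I|}$ as the quotient of $\T^{|I|}$ by the (closed, one-dimensional) image of $z\mapsto(z^{a_i})_{i\in I}$, hence a torus of dimension $|I|-1$, and conclude by multiplicativity of $\chi$. Your route is more elementary and avoids the paper's case split on whether the weights coincide; the paper's route, on the other hand, is the one that scales to the $\OO(2)$ computations in Section~\ref{sec:O2Reps}, which is presumably why it is used here. One small caution on your parenthetical alternative: if you let a \emph{coordinate} circle of $\T^{|I|}$ act on $\Sp^1\backslash V_I$ and appeal to Corollary~\ref{cor:S1ActionMain}, the induced action has finite isotropy only if some $a_{i_j}$ with $j$ \emph{not} the chosen coordinate is nonzero (otherwise the isotropy can be all of $\Sp^1$), so the coordinate must be chosen accordingly; your primary torus-quotient argument needs no such care and is complete as written.
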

\begin{proof}
Consider the partition of $\Sp^1\backslash V$ into the sets $\{ \Sp^1\backslash V_I : I\subseteq\underline{n} \}$.
Each element $\Sp^1\backslash V_I$ is contained in the weak orbit type of the group $R\big(\gcd(\mathbf{a}_I)\big)$.
In particular, $\Sp^1\backslash V^{\Sp^1} = V^{\Sp^1} = \bigsqcup_{I\subseteq\underline{n} \colon \mathbf{a}_I =\{0\}} V_I$,
and each $\Sp^1\backslash V_I$ with $\mathbf{a}_I\neq\{0\}$ is contained in the weak orbit type of the finite group $R\big(\gcd(\mathbf{a}_I)\big)$.
As $\Sp^1$ is abelian so that conjugation is trivial, we can rewrite
Equation~\eqref{eq:GamECAsSumFinerPartition} in this context as
\begin{equation}
\begin{split}
\label{eq:GamECS1Rep}
    \chi_\Gamma(\Sp^1\ltimes V)
    &=          \chi\big(\HOM(\Gamma,\Sp^1)\big) \chi\big( \Sp^1\backslash V^{\Sp^1}\big)
    \\&\quad    + \sum\limits_{I\subseteq\underline{n} \colon \mathbf{a}_I \neq\{0\}}
                \chi\big(\HOM(\Gamma,\Z/\gcd(\mathbf{a}_I)\Z)\big) \chi\big(\Sp^1\backslash V_I\big).
\end{split}
\end{equation}
The set $\Sp^1\backslash V^{\Sp^1} = V^{\Sp^1}$ is a complex subspace of $V$, hence a single even-dimensional cell with Euler characteristic $1$,
so that the first term in Equation~\eqref{eq:GamECS1Rep} reduces to $\chi\big(\HOM(\Gamma,\Sp^1)\big)$.

For each $i$ such that $a_i\neq 0$, $V_{\{i\}} \simeq\C\smallsetminus\{0\}$, and $z\in\Sp^1$ acts on $V_{\{i\}}$ as multiplication by $z^{a_i}$.
It follows that the orbit space $\Sp^1\backslash V_{\{i\}}$ is an open interval with isotropy group $R(|a_i|)\simeq\Z/a_i\Z$, and the corresponding
term in \eqref{eq:GamECS1Rep} is $-\chi\big(\HOM(\Gamma,\Z/a_i\Z)\big)$.

To complete the proof, we claim that for each $I$ that contains at least two elements and such that $\mathbf{a}_I\neq\{0\}$,
we have $\chi\big(\Sp^1\backslash V_I\big) = 0$.
Let $I = \{i_1,\ldots,i_k\}$ with $k\geq 2$ and at least one $a_{i_j}\neq 0$ so that $\mathbf{a}_I\neq\{0\}$.
If the $a_{i_1},\ldots,a_{i_k}$ do not all coincide, then the circle action on $V_I$ by scalar multiplication, i.e., the restriction of
the action with weight vector $(1,\ldots,1)$ on the vector space $V_I^\cl$, commutes with the $\Sp^1$-action on $V_I$ and has finite intersection
with $\rho(\Sp^1)_{|V_I}$. If $a_{i_1} = a_{i_2} = \cdots = a_{i_k}$, then the same holds for the circle action with weight
vector $(1,\ldots,1,-1)$ on $V_I^\cl$. In either case, $\chi\big(\Sp^1\backslash V_I\big) = 0$ by Corollary~\ref{cor:SimpCircleActionRepMulti}.
\end{proof}

\begin{remark}
\label{rem:GamECS1RepReal}
The only real irreducible representation of $\Sp^1$ that does not admit a unitary structure is the trivial $1$-dimensional representation.
Hence, if $V$ is a finite-dimensional real linear representation of $\Sp^1$ then $V = W\oplus V^{\Sp^1}$ where $W$ admits a unitary structure and
$V^{\Sp^1}$ is a real vector space. Say $W$ has weight vector $(a_1,\ldots,a_n)\in\Z^n$ and note that each $a_i\neq 0$ by construction. As groupoids,
$\Sp^1\ltimes V \simeq (\Sp^1\ltimes W)\times V^{\Sp^1}$ where $V^{\Sp^1}$ is the base groupoid consisting only of units.
Then by the multiplicativity of $\chi_\Gamma$ \cite[Lem.~4.17]{FarsiSeatonEC}, we have that the $\Gamma$-Euler characteristic
$\chi_\Gamma(\Sp^1\ltimes V)$ is given by
\begin{align*}
    (-1)^d \chi_\Gamma(\Sp^1\ltimes W)
        + (-1)^d\chi\big(\HOM(\Gamma,\Sp^1)\big)
        + (-1)^{d+1}\sum\limits_{i=1}^n \chi\big(\HOM(\Gamma,\Z/a_i\Z)\big),
\end{align*}
where $d$ is the real dimension of $V^{\Sp^1}$.
\end{remark}

Recall that $\F_\ell$ denotes the free group with $\ell$ generators.
When $\Gamma = \Z^\ell$ or $\F_\ell$, we have $\HOM(\Gamma,\Sp^1)\simeq(\Sp^1)^\ell$ and
$\HOM(\Gamma,\Z/a_i\Z) \simeq (\Z/a_i\Z)^\ell$, implying the following.

\begin{corollary}
\label{cor:FreeECS1Rep}
Let $\rho\colon\Sp^1\to\U(V)$ be a unitary $\Sp^1$-representation with weight vector $\mathbf{a} = (a_1,\ldots,a_n)\in\Z^n$. Then
\[
    \chi_{\Z^\ell}(\Sp^1\ltimes V) = \chi_{\F_\ell}(\Sp^1\ltimes V)
    =   -\sum\limits_{i=1}^n \vert a_i \vert^\ell.
\]
If $V$ is a real representation so that $V = W\oplus V^{\Sp^1}$ where $W$ is unitary with weight vector
$\mathbf{a} = (a_1,\ldots,a_n)\in\Z^n$ and each $a_i\neq 0$, then
\[
    \chi_{\Z^\ell}(\Sp^1\ltimes V) = \chi_{\F_\ell}(\Sp^1\ltimes V)
    =   (-1)^{d+1}\sum\limits_{i=1}^n \vert a_i \vert^\ell,
\]
where $d$ is the real dimension of $V^{\Sp^1}$.
\end{corollary}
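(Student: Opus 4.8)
The plan is to read off both identities from Theorem~\ref{thrm:GamECS1Rep} (and, in the real case, from the product decomposition in Remark~\ref{rem:GamECS1RepReal}) by specializing $\Gamma$ to $\Z^\ell$ and to $\F_\ell$ and evaluating the Euler characteristics of the relevant spaces of homomorphisms. First I would record the standard identifications: for any topological group $K$, a homomorphism $\F_\ell\to K$ is freely determined by the images of the $\ell$ free generators, so evaluation on generators is a homeomorphism $\HOM(\F_\ell,K)\to K^\ell$ with respect to the compact-open topology, while $\HOM(\Z^\ell,K)$ is the subspace of pairwise commuting $\ell$-tuples, which is all of $K^\ell$ when $K$ is abelian. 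Since $\Sp^1$ and each finite group $\Z/a_i\Z$ is abelian, for $\Gamma\in\{\Z^\ell,\F_\ell\}$ we get $\HOM(\Gamma,\Sp^1)\cong(\Sp^1)^\ell$ and $\HOM(\Gamma,\Z/a_i\Z)\cong(\Z/a_i\Z)^\ell$, and these do not depend on which of the two groups $\Gamma$ is; since the integrand in \eqref{eq:GamEC} therefore agrees for the two choices, this alone gives $\chi_{\Z^\ell}(\Sp^1\ltimes V)=\chi_{\F_\ell}(\Sp^1\ltimes V)$, and it remains only to compute one of them.

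Next I would evaluate the two Euler characteristics. Since $\chi(\Sp^1)=0$ and $\chi$ is multiplicative over Cartesian products of affine definable spaces by \cite[Ch.~4, Cor.~(2.11)]{vandenDriesBook}, we have $\chi\big(\HOM(\Gamma,\Sp^1)\big)=\chi\big((\Sp^1)^\ell\big)=0$ for $\ell\geq 1$. The space $(\Z/a_i\Z)^\ell$ is finite and discrete of cardinality $|a_i|^\ell$, so its Euler characteristic --- the number of its $0$-cells --- is $|a_i|^\ell$; this vanishes precisely when $a_i=0$ (using $\ell\geq 1$), so the sum over the indices $i$ with $a_i\neq 0$ appearing in Theorem~\ref{thrm:GamECS1Rep} may be extended to a sum over all $i$. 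Substituting into the formula of Theorem~\ref{thrm:GamECS1Rep} then gives $\chi_\Gamma(\Sp^1\ltimes V)=0-\sum_{i=1}^n|a_i|^\ell=-\sum_{i=1}^n|a_i|^\ell$, which is the first assertion.

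For the real case I would use the groupoid isomorphism $\Sp^1\ltimes V\simeq(\Sp^1\ltimes W)\times V^{\Sp^1}$ from Remark~\ref{rem:GamECS1RepReal}, where $W$ is unitary with weight vector $(a_1,\ldots,a_n)$ (each $a_i\neq 0$) and $d=\dim_\R V^{\Sp^1}$. By multiplicativity of $\chi_\Gamma$ \cite[Lem.~4.17]{FarsiSeatonEC}, together with $\chi(V^{\Sp^1})=\chi(\R^d)=(-1)^d$ and $\chi\big(\HOM(\Gamma,\{e\})\big)=1$, this gives $\chi_\Gamma(\Sp^1\ltimes V)=(-1)^d\,\chi_\Gamma(\Sp^1\ltimes W)$. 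Plugging in the unitary formula already established (which applies since every $a_i\neq 0$) yields $\chi_\Gamma(\Sp^1\ltimes V)=(-1)^d\big(-\sum_{i=1}^n|a_i|^\ell\big)=(-1)^{d+1}\sum_{i=1}^n|a_i|^\ell$, the second assertion.

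I do not expect a genuine obstacle here: all the substance is contained in Theorem~\ref{thrm:GamECS1Rep}, and what is left is a routine specialization. The two points deserving a moment of care are that $\HOM(\F_\ell,K)\cong K^\ell$ holds as an identification of \emph{topological} spaces, not merely of sets, so that the Euler characteristic computation is legitimate; and the tacit standing assumption $\ell\geq 1$, for $\ell=0$ the claimed formulas fail, since then $\chi\big(\HOM(\Gamma,\Sp^1)\big)=1$ rather than $0$ and Theorem~\ref{thrm:GamECS1Rep} would instead produce $1-\#\{i:a_i\neq 0\}$.
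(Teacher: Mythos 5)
Your proposal is correct and follows essentially the same route as the paper: the corollary is obtained by specializing Theorem~\ref{thrm:GamECS1Rep} and Remark~\ref{rem:GamECS1RepReal} via the identifications $\HOM(\Z^\ell,K)\simeq\HOM(\F_\ell,K)\simeq K^\ell$ for abelian $K$, together with $\chi\big((\Sp^1)^\ell\big)=0$ and $\chi\big((\Z/a_i\Z)^\ell\big)=|a_i|^\ell$. Your remarks on the topological nature of the identification and the tacit assumption $\ell\geq 1$ are sensible but do not change the argument.
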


\begin{remark}
\label{rem:FreeECS1RepAdditive}
From Corollary~\ref{cor:FreeECS1Rep}, we see that when $\Gamma = \Z^\ell$ or $\F_\ell$, if $V_1$ and $V_2$ are two unitary
representations of $\Sp^1$, then
\[
    \chi_\Gamma\big(\Sp^1\ltimes (V_1\oplus V_2)\big)
        =   \chi_\Gamma(\Sp^1\ltimes V_1) + \chi_\Gamma(\Sp^1\ltimes V_2).
\]
By Theorem~\ref{thrm:GamECS1Rep} and Remark~\ref{rem:GamECS1RepReal},
this identity does not hold for $\Gamma$ such that $\chi\big(\HOM(\Gamma,\Sp^1)\big) \neq 0$, nor for
$\Gamma = \Z^\ell$ or $\F_\ell$ when the $V_i$ are real representations. We will see in Section~\ref{sec:O2Reps}
that this also does not hold for representations of $\OO(2)$, even when restricting to unitary representations;
see Theorem~\ref{thrm:GamECO2Rep}.
\end{remark}

\begin{remark}
\label{rem:FreeECS1RepDependence}
Regarding the question of how much the $\Gamma$-Euler characteristics $\chi_\Gamma$ depend on the representation, let us observe that if
$V_1$ and $V_2$ are unitary representations of the circle such that
$\chi_\Gamma(\Sp^1\ltimes V_1) = \chi_\Gamma(\Sp^1\ltimes V_2)$ for $\Gamma = \Z^\ell$ with $\ell = 1,\ldots, n$,
then the absolute values of the weights of $V_1$ and $V_2$ coincide up to permuting weights. This follows from
Corollary~\ref{cor:FreeECS1Rep} and the facts that the power sums generate the elementary symmetric polynomials
and the elementary symmetric polynomials separate orbits of points in $\C^n$.
If the $V_i$ are real representations, then the same holds, and the sign of any $\chi_{\Z^\ell}(\Sp^1\ltimes V_i)$
determines the parity of the real dimensions of the $V_i^{\Sp^1}$. Of course, if the real dimensions of the $V_i^{\Sp^1}$ have the same parity
and the absolute values of the weights are the same for both, then $\chi_\Gamma(\Sp^1\ltimes V_1) = \chi_\Gamma(\Sp^1\ltimes V_2)$
for any $\Gamma$ by Theorem~\ref{thrm:GamECS1Rep} and Remark~\ref{rem:GamECS1RepReal}.
\end{remark}

A slight modification of the proof of Theorem~\ref{thrm:GamECS1Rep} also yields the following.

\begin{corollary}
\label{cor:GamECS1Set}
Let $\rho\colon\Sp^1\to\U(V)$ be a unitary $\Sp^1$-representation with weight vector $\mathbf{a} = (a_1,\ldots,a_n)\in\Z^n$.
Suppose $X\subset V$ is an $\Sp^1$-invariant definable subset with the following property:
For each $I = \{i_1,\ldots,i_k\}$ with $k\geq 2$ and $\mathbf{a}_I \neq \{0\}$ such that $V_I\cap X\neq\emptyset$,
if $\{ a_i : i\in I\}$ has more than one element, then $X\cap V_I$ is invariant under the circle action by scalar multiplication,
and if $\{ a_i : i\in I\}$ is a singleton, then $X\cap V_I$ is invariant under a circle action with weight $(\pm 1, \ldots, \pm 1)$
where each sign occurs at least once. Then
\begin{align*}
    \chi_\Gamma(\Sp^1\ltimes X)
    &=          \chi\big(\HOM(\Gamma,\Sp^1)\big) \chi\big(X\cap V^{\Sp^1}\big)
    \\&\quad    +\sum\limits_{\substack{i=1 \\ a_i\neq 0}}^n
                \chi\big(G\backslash(X\cap V_{\{i\}})\big) \chi\big(\HOM(\Gamma,\Z/a_i\Z)\big).
\end{align*}
\end{corollary}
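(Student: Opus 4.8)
The plan is to follow the proof of Theorem~\ref{thrm:GamECS1Rep} almost verbatim, replacing each piece $V_I$ of the partition of $V$ by its intersection $X\cap V_I$ with $X$. First I would observe that $\{X\cap V_I : I\subseteq\underline{n}\}$ is a finite partition of $X$ into $\Sp^1$-invariant definable subsets, and that $\Sp^1\backslash(X\cap V_I)$ lies in the weak orbit type of the (finite, or trivial when $\mathbf{a}_I=\{0\}$) group $R\big(\gcd(\mathbf{a}_I)\big)$. Discarding the indices with $X\cap V_I=\emptyset$ and using that $\Sp^1$ is abelian so that conjugation is trivial, Equation~\eqref{eq:GamECAsSumFinerPartition} applied to this partition of $\vert\Sp^1\ltimes X\vert = \Sp^1\backslash X$ gives
\[
    \chi_\Gamma(\Sp^1\ltimes X)
    = \sum_{I\subseteq\underline{n}} \chi\big(\Sp^1\backslash(X\cap V_I)\big)\,\chi\big(\HOM(\Gamma,R(\gcd(\mathbf{a}_I)))\big).
\]

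Next I would collect the summands into the three groups appearing in the statement. The indices with $\mathbf{a}_I=\{0\}$ together exhaust $X\cap V^{\Sp^1}$, on which $\Sp^1$ acts trivially; by additivity of $\chi$ their combined contribution is $\chi\big(\HOM(\Gamma,\Sp^1)\big)\,\chi\big(X\cap V^{\Sp^1}\big)$. The singleton indices $I=\{i\}$ with $a_i\neq 0$ give the terms $\chi\big(\Sp^1\backslash(X\cap V_{\{i\}})\big)\,\chi\big(\HOM(\Gamma,\Z/a_i\Z)\big)$ displayed in the statement; these do not simplify further, unlike in Theorem~\ref{thrm:GamECS1Rep}, since $X\cap V_{\{i\}}$ need not be all of $V_{\{i\}}$. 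It then suffices to show that every remaining index, i.e.\ every $I$ with $\vert I\vert\geq 2$, $\mathbf{a}_I\neq\{0\}$, and $X\cap V_I\neq\emptyset$, contributes zero, which amounts to the claim that $\chi\big(\Sp^1\backslash(X\cap V_I)\big)=0$.

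To prove this claim I would apply Corollary~\ref{cor:SimpCircleActionRepMulti}, or rather the circle action constructed in its proof, with $G=\Sp^1$, ambient set $X$, isotropy group $H=R\big(\gcd(\mathbf{a}_I)\big)$, and $Z=X\cap V_I\subseteq X_H$, noting that $X_H=X_{(H)}$ and $N_G(H)=\Sp^1$ since $\Sp^1$ is abelian, so that $\rho\big(N_G(H)\big)_{|X\cap V_I}$ is the diagonal $\Sp^1$-action on $X\cap V_I$. The hypothesis on $X$ supplies exactly the circle $T$ required: if $\{a_i : i\in I\}$ has more than one element, $T$ is scalar multiplication on $V_I^{\cl}$; if $\{a_i : i\in I\}=\{a\}$ with $a\neq 0$, $T$ acts on $V_I^{\cl}$ with a weight vector $(\pm 1,\ldots,\pm 1)$ in which both signs occur. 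In either case $T$ is diagonal, hence lies in $C_{\HOMEO(X\cap V_I)}\big(\rho(\Sp^1)_{|X\cap V_I}\big)$, acts on $X\cap V_I$ with trivial point isotropy, and meets $\rho(\Sp^1)_{|X\cap V_I}$ in a finite subset of $\HOMEO(X\cap V_I)$ --- in the first case because some two of the weights $a_{i_j},a_{i_l}$ differ, and in the second because $w=z^a$ together with $w^{-1}=z^a$ forces $w^2=1$. Thus Corollary~\ref{cor:SimpCircleActionRepMulti} applies, its proof produces a definable circle action with finite isotropy on $N_G(H)\backslash(X\cap V_I)=\Sp^1\backslash(X\cap V_I)$, and Corollary~\ref{cor:S1ActionMain} gives $\chi\big(\Sp^1\backslash(X\cap V_I)\big)=0$. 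Summing the three groups of terms then yields the asserted formula.

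The argument is largely bookkeeping, so I do not anticipate a genuine obstacle; the one point deserving care is that, in contrast to Theorem~\ref{thrm:GamECS1Rep} where each $V_I$ is automatically invariant under scalar multiplication (or under the weight $(1,\ldots,1,-1)$ action), for a general invariant set $X$ the multi-element pieces $X\cap V_I$ need not inherit such a circle action --- the hypothesis imposed on $X$ is precisely what is needed to invoke Corollary~\ref{cor:SimpCircleActionRepMulti}. One must also be careful to apply that corollary only to the nonempty pieces $X\cap V_I$, the empty ones contributing zero to the sum in any case.
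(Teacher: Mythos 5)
Your proposal is correct and is precisely the ``slight modification of the proof of Theorem~\ref{thrm:GamECS1Rep}'' that the paper itself invokes: the same partition by the sets $X\cap V_I$, the same three groups of terms, and the same use of Corollary~\ref{cor:SimpCircleActionRepMulti} (with the circle supplied now by the hypothesis on $X$ rather than automatically) to kill the pieces with $\vert I\vert\geq 2$ and $\mathbf{a}_I\neq\{0\}$. The points you flag as needing care --- that the invariance hypothesis on $X$ is exactly what replaces the automatic invariance of $V_I$, and that the singleton terms no longer simplify --- are the right ones.
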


Note that the hypotheses of Corollary~\ref{cor:GamECS1Set} are satisfied for any $X$ defined by restrictions on the moduli of the coordinates
of elements of $V$. As an example, let $S = \{(x_1,\ldots,x_n)\in V: |x_1|^2 + \cdots + |x_n|^2 = 1 \}\simeq\Sp^{2n-1}$ denote the unit sphere in $V$.
Then $S\cap V^{\Sp^1}$ is empty or an odd-dimensional sphere so that $\chi\big(S\cap V^{\Sp^1}\big) = 0$,
and if $a_i\neq 0$, then $G\backslash(S\cap V_{\{i\}})$ is a point so that $\chi\big(G\backslash(S\cap V_{\{i\}})\big) = 1$, yielding
\[
    \chi_\Gamma(\Sp^1\ltimes S)
    =   \sum\limits_{\substack{i=1 \\ a_i\neq 0}}^n \chi\big(\HOM(\Gamma,\Z/a_i\Z)\big).
\]
Similarly, if $B = \{(x_1,\ldots,x_n)\in V: |x_1|^2 + \cdots + |x_n|^2 \leq 1 \}$ is the unit ball in $V$, then
$B\cap V^{\Sp^1}$ is a point or closed ball so that $\chi\big(B\cap V^{\Sp^1}\big) = 1$, and
$G\backslash(B\cap V_{\{i\}})$ is a half-open interval with $\chi\big(G\backslash(B\cap V_{\{i\}})\big) = 0$,
yielding
\[
    \chi_\Gamma(\Sp^1\ltimes B)
    =   \chi\big(\HOM(\Gamma,\Sp^1)\big).
\]
Another important case of an $\Sp^1$-invariant subset of a unitary $\Sp^1$-representation $V$ is the zero fiber of the moment map, which
will be considered in Section~\ref{sec:SympQuot} in the general setting of a unitary representation of an arbitrary compact Lie group.


\section{Computations for $\OO(2)$-representations}
\label{sec:O2Reps}

Throughout this section, $G = \OO(2) = \OO(2,\R)$, which consists of $\SO(2)$, the rotations
\[
    r_\theta = \begin{pmatrix}
        \cos{\theta}   &   -\sin{\theta}   \\
        \sin{\theta}   &   \cos{\theta}
    \end{pmatrix}
\]
and $\OO(2)\smallsetminus\SO(2)$, the reflections
\[
    s_\theta = \begin{pmatrix}
        \cos{\theta}   &   \sin{\theta}   \\
        \sin{\theta}   &   -\cos{\theta}
    \end{pmatrix},
\]
for $\theta\in\R$. We briefly recall the representation theory of $G$ and refer the reader to
\cite[Theorem~7.2.1]{PalmThesis} and \cite[Section~11.2]{Knightly} for more information.

The irreducible unitary representations of $\OO(2)$ are those induced from the $1$-dimensional irreducible representations
of $\SO(2)\simeq\Sp^1$. For $a\in\Z$, let $\epsilon_a$ denote the $1$-dimensional circle representation given by multiplication by $z^a$
for $z \in\Sp^1$ (i.e., the representation with weight vector $(a)$ in the language of Section~\ref{sec:CircleReps}).
Let $\tau_a$ denote the induced representation of $\OO(2)$. Then $\tau_0$ splits into the trivial $1$-dimensional representation and the
$1$-dimensional representation $\det$ with kernel $\SO(2)$ whose action is multiplication by the determinant $\pm 1$.
For $a\neq 0$, $\tau_a$ is irreducible and isomorphic to $\tau_{-a}$. We will therefore only consider $\tau_\alpha$ for $\alpha > 0$,
using the notation $\alpha$  to emphasize that the integer must be positive. Then $\{ \det, \tau_\alpha : \alpha > 0 \}$ is a complete
list of unitary irreducible representations of $\OO(2)$. Expressed as matrices, $\det\colon r_\theta\mapsto (1)$ and $\det\colon s_\theta\mapsto (-1)$,
while for $\alpha > 0$,
\[
    \tau_\alpha\colon r_\theta\mapsto \begin{pmatrix}
        e^{\sqrt{-1}\,\alpha\theta}   &   0   \\
        0   &   e^{-\sqrt{-1}\,\alpha\theta}
    \end{pmatrix},
    \qquad
    \tau_\alpha\colon s_\theta\mapsto \begin{pmatrix}
        0   &   e^{\sqrt{-1}\,\alpha\theta}   \\
        e^{-\sqrt{-1}\,\alpha\theta}   &  0
    \end{pmatrix}.
\]
In either $\det$ or $\tau_\alpha$, the isotropy group of the origin is of course $\OO(2)$. The isotropy group of any nonzero point of $\det$
is $\SO(2)$. In $\tau_\alpha$ with coordinates $(x_1,x_2)$, the kernel is the group $R(\alpha)\simeq\Z/\alpha\Z$ of $\alpha$th roots of unity in $\SO(2)$.
If $|x_1| = |x_2|$, then $x_1 = e^{\sqrt{-1}\,\alpha\theta}x_2$ has solutions corresponding to $\alpha$ distinct reflections so that $(x_1, x_2)$
has isotropy isomorphic to $D_{2\alpha}$, the dihedral group with $2\alpha$ elements.


\subsection{Computations for $\OO(2)$-representations with general $\Gamma$}
\label{subsec:O2RepsGenGamma}

An arbitrary unitary representation $V$ of $\OO(2)$ such that $V^{\OO(2)} = \{0\}$ is of the form
\begin{equation}
\label{eq:genO2Rep}
    V   =   \left(\bigoplus\limits_{i=1}^n \tau_{\alpha_i}\right)\oplus d\det
\end{equation}
where each $\alpha_i > 0$, $d\geq 0$, and $\dim V = 2n + d$. Note that
the underlying action of $\SO(2)\simeq\Sp^1$ has weight vector $(\alpha_1,-\alpha_1,\ldots,\alpha_n,-\alpha_n, \overset{d}{\overbrace{0,\ldots,0}})$.

Let $(x_{i,1},x_{i,2})$ be complex coordinates for $\tau_{\alpha_i}$ and let $y_j$ be the complex coordinate for the $j$th $\det$ factor.
Then the complex coordinates for $V$ are given by $(x_{1,1},x_{1,2},x_{2,1},\ldots,x_{n,1},x_{n,2},y_1,\ldots,y_d)\in\C^{2n+d}$.
It will be convenient to partition $V$
into subsets of orbit types as follows:
\begin{itemize}
\item   The origin, which has isotropy $\OO(2)$.
\item   For each $i\in\underline{n}$, the set $\boldsymbol{X}_i$ of points $\boldsymbol{v}\in V$ such that exactly one of $x_{i,1}$ and $x_{i,2}$ are
        nonzero and all other coordinates are zero.
        These points have isotropy group $R(\alpha_i)$ isomorphic to $\Z/\alpha_i\Z$.
\item   For each nonempty $I\subseteq\underline{n}$, the set $\boldsymbol{X}_I^\ast$ of points $\boldsymbol{v}\in V$ with  $x_{i,1}, x_{i,2} \neq 0$
        for $i\in I$ and all other coordinates zero such that $|x_{i,1}| = |x_{i,2}|$ for all $i$ and there is a common solution $\theta\in[0,2\pi)$ to
        $e^{\sqrt{-1}\,\alpha_i\theta} x_{i,2} = x_{i,1}$ for $i\in I$.
        It follows that there are $\gcd\{\alpha_i : i\in I\}$ such solutions so that these points have isotropy isomorphic to $D_{2\gcd\{\alpha_i : i\in I\}}$.
\item   For each $I\subseteq\underline{n}$ with at least two elements, the set $\boldsymbol{X}_I^\times$ of points $\boldsymbol{v}\in V$ such that
        either all of the $x_{i,1}\neq 0$ for $i\in I$ or all of the $x_{i,2}\neq 0$ for $i\in I$, with all other coordinates zero.
        These points have isotropy isomorphic to $\Z/\gcd\{\alpha_i:i\in I\}\Z$.
\item   For each nonempty $I\subseteq\underline{n}$, the set $\boldsymbol{X}_I$ of points $\boldsymbol{v}\in V$ such that at least one of $x_{i,1}$
        and $x_{i,2}$ are nonzero for each $i\in I$; $x_{i,1} = x_{i,2} = 0$ for $i\notin I$; $y_j = 0$ for all $j$;
        $\boldsymbol{v}\notin \boldsymbol{X}_I^\ast$; $\boldsymbol{v}\notin\boldsymbol{X}_I^\times$; and,
        if $I$ has one element $i$, both $x_{i,1}, x_{i,2}\neq 0$ so that $\boldsymbol{v}\notin \boldsymbol{X}_i$.
        These points have isotropy isomorphic to $\Z/\gcd\{\alpha_i:i\in I\}\Z$.
\item   For each nonempty $J\subseteq\underline{d}$, the set $\boldsymbol{Y}_J$ of points $\boldsymbol{v}\in V$ such that $y_j\neq 0$ for $j\in J$
        with all other coordinates zero.
        These points have isotropy $\SO(2)$.
\item   For each nonempty $I\subseteq\underline{n}$ and nonempty $J\subseteq\underline{d}$, the set $\boldsymbol{X}_{I,J}$ of points $\boldsymbol{v}\in V$
        such that at least one of $x_{i,1}$ and $x_{i,2}$ are nonzero for each $i\in I$, $y_j\neq 0$ for $j\in J$, and all other coordinates are zero.
        These points have isotropy isomorphic to $\Z/\gcd\{\alpha_i:i\in I\}\Z$.
\end{itemize}
Note that if $x_{i,1}\neq 0$ and $x_{i,2} = 0$ for a point $\boldsymbol{v}\in V$, then the action of $\OO(2)\smallsetminus \SO(2)$ maps
$\boldsymbol{v}$ to a point satisfying $x_{i,1}= 0$ and $x_{i,2}\neq 0$.
Otherwise, the $\OO(2)$-action preserves the zero and nonzero coordinates of $\boldsymbol{v}$.
If $\boldsymbol{v}\in\boldsymbol{X}_I^\ast$ for some
$I\subseteq\underline{n}$, then the action of $\OO(2)$ preserves the number of common solutions $\theta\in[0,2\pi)$ to
$e^{\sqrt{-1}\,\alpha_i\theta} x_{i,2} = x_{i,1}$ for $i\in I$.
Therefore, each of the sets $\boldsymbol{X}_i$, $\boldsymbol{X}_I^\ast$, $\boldsymbol{X}_I^\times$, $\boldsymbol{X}_I$, $\boldsymbol{Y}_J$, and
$\boldsymbol{X}_{I,J}$ defined above are $\OO(2)$-invariant. Note that points in $\boldsymbol{X}_I^\ast$, $\boldsymbol{X}_I^\times$, and
$\boldsymbol{X}_I$ have at least two nonzero coordinates.

\begin{theorem}
\label{thrm:GamECO2Rep}
Let $\rho\colon\OO(2)\to\U(V)$ be a unitary $\OO(2)$-representation with $V^{\OO(2)} = \{0\}$ of the form \eqref{eq:genO2Rep}
and let $\Gamma$ be a finitely presented discrete group. Then
\begin{equation}
\label{eq:GamECO2Rep}
    \chi_\Gamma\big(\OO(2)\ltimes V\big)
    =   \chi\big(\OO(2)\backslash\HOM(\Gamma,\OO(2))\big)
        -\sum\limits_{i=1}^n \chi\big(\HOM(\Gamma,\Z/\alpha_i\Z)\big).
\end{equation}
\end{theorem}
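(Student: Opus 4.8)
The plan is to compute $\chi_\Gamma(\OO(2)\ltimes V)$ using the partition of $\OO(2)\backslash V$ into orbit types given by Equation~\eqref{eq:GamECAsSumFinerPartition}, and to show that all pieces vanish except the origin (contributing $\chi\big(\OO(2)\backslash\HOM(\Gamma,\OO(2))\big)$) and the sets $\boldsymbol{X}_i$ (each contributing $-\chi\big(\HOM(\Gamma,\Z/\alpha_i\Z)\big)$). The main tool is Corollary~\ref{cor:SimpCircleActionRepMulti} and, where it applies more directly, Corollary~\ref{cor:SimpCircleActionDiag}: for each orbit-type piece we exhibit an auxiliary circle acting on the corresponding stratum of the orbit space with finite isotropy, forcing that stratum's Euler characteristic to zero.

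First I would handle the ``diagonal'' pieces. The sets $\boldsymbol{Y}_J$ have isotropy $\SO(2)$; since $0\notin\boldsymbol{Y}_J$ and the diagonal circle $D$ acts freely on the $y$-coordinates while commuting with $\rho(\OO(2))$, Corollary~\ref{cor:SimpCircleActionDiag} (or a direct application of Corollary~\ref{cor:SimpCircleActionRepMulti}) gives $\chi\big(\OO(2)\backslash\boldsymbol{Y}_J\big)=0$. The same argument, using scalar multiplication on the relevant coordinate subspace (which has trivial fixed set away from the origin and finite intersection with $\rho(N_{\OO(2)}(H))$ since that image is essentially a dihedral or cyclic group), kills $\boldsymbol{X}_{I,J}$ for all $I,J$. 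For the sets $\boldsymbol{X}_I^\times$, which live in the $x$-coordinate hyperplane where exactly the ``first'' or ``second'' coordinates are nonzero, I would again use the scalar circle on that coordinate subspace: since $|I|\geq 2$ one checks this circle has finite intersection with the image of $N_{\OO(2)}(H)$ in $\HOMEO$, so Corollary~\ref{cor:SimpCircleActionRepMulti} applies and the contribution vanishes. For $\boldsymbol{X}_I$ with $|I|\geq 2$, the same scalar circle works (this is the exact analogue of the vanishing argument for $V_I$ with $|I|\geq 2$ in the proof of Theorem~\ref{thrm:GamECS1Rep}), and for $\boldsymbol{X}_I$ with $I=\{i\}$ a singleton (both $x_{i,1},x_{i,2}\neq 0$ but not in $\boldsymbol{X}_i^\ast$), the circle acting by $(z,z^{-1})$ or $(z,z)$ on $(x_{i,1},x_{i,2})$ — chosen so it is not absorbed into $\rho(\OO(2))$ — has finite isotropy on the relevant orbit-space stratum.

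The genuinely delicate case is $\boldsymbol{X}_I^\ast$, the locus where $|x_{i,1}|=|x_{i,2}|$ with a common diagonalizing angle, which has dihedral isotropy $D_{2\gcd\{\alpha_i:i\in I\}}$. Here $N_{\OO(2)}(H)$ for a dihedral $H$ is itself finite (or at most $H$ together with a small finite extension), so the centralizer of $\rho(N_{\OO(2)}(H))$ in $\HOMEO(\boldsymbol{X}_I^\ast\cap V_H)$ is large, and I expect the scalar circle on the $x$-coordinates to commute with $\rho(N_{\OO(2)}(H))$ and to have finite intersection with its image — giving $\chi\big(\OO(2)\backslash\boldsymbol{X}_I^\ast\big)=0$ via Corollary~\ref{cor:SimpCircleActionRepMulti}. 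The main obstacle will be verifying carefully that this circle descends to the quotient $N_{\OO(2)}(H)\backslash(\boldsymbol{X}_I^\ast\cap V_H)$ with genuinely finite isotropy, i.e. that no nontrivial element of the scalar circle lies in the image of $\rho(N_{\OO(2)}(H))_{|\boldsymbol{X}_I^\ast}$ modulo the finite ambiguity — this requires tracking how the reflections in $\OO(2)$ act on the $|x_{i,1}|=|x_{i,2}|$ locus. Once all of these vanish, the only surviving strata are the origin and the $\boldsymbol{X}_i$; the latter is, as in the $\Sp^1$ case, an orbit space homeomorphic to an open interval (parametrizing the modulus of the single nonzero coordinate), so $\chi\big(\OO(2)\backslash\boldsymbol{X}_i\big)=-1$ with isotropy $\Z/\alpha_i\Z$, and Equation~\eqref{eq:GamECAsSumFinerPartition} yields \eqref{eq:GamECO2Rep}.
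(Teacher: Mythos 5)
Your decomposition, your two surviving strata, and your use of the auxiliary-circle localization are all the same as in the paper, and most of the vanishing arguments go through as you describe. In particular, the worry you flag about $\boldsymbol{X}_I^\ast$ resolves favorably: for $v\in\boldsymbol{X}_I^\ast$, requiring $e^{\sqrt{-1}\,\phi}v\in\OO(2)v$ forces $e^{2\sqrt{-1}\,\phi}=1$ for rotations (since both $e^{\sqrt{-1}\,\alpha_i\psi}$ and $e^{-\sqrt{-1}\,\alpha_i\psi}$ must equal $e^{\sqrt{-1}\,\phi}$) and also for reflections (the two defining equations give $e^{\pm\sqrt{-1}\,\alpha_i(\psi-\theta)}=e^{\sqrt{-1}\,\phi}$ simultaneously), so the scalar circle acts on $\OO(2)\backslash\boldsymbol{X}_I^\ast$ with isotropy contained in $\{\pm 1\}$ and Theorem~\ref{thrm:TorusLocalization} applies. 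The sets $\boldsymbol{Y}_J$, $\boldsymbol{X}_{I,J}$, and $\boldsymbol{X}_I$ are likewise handled by the scalar circle exactly as you and the paper say (every point of $\boldsymbol{X}_I$ has nonzero coordinates carrying both a positive and a negative $\SO(2)$-weight, which is what makes the scalar circle transverse to $\rho(\SO(2))$ there); and for the singleton case of $\boldsymbol{X}_{\{i\}}$ note that it is the weight $(z,z^{-1})$ that is absorbed into $\rho(\SO(2))$, so the scalar $(z,z)$ is the correct choice.

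The genuine gap is your treatment of $\boldsymbol{X}_I^\times$ when the $\alpha_i$, $i\in I$, all coincide, say all equal to $\alpha$. On the component of $\boldsymbol{X}_I^\times$ where the nonzero coordinates are the $x_{i,1}$, $i\in I$, the rotation $r_\psi$ acts as the scalar $e^{\sqrt{-1}\,\alpha\psi}$, so for every $\phi$ and every $v$ in this component one has $e^{\sqrt{-1}\,\phi}v=r_{\phi/\alpha}v\in\SO(2)v$. The scalar circle therefore descends to the \emph{trivial} action on $\OO(2)\backslash\boldsymbol{X}_I^\times$: every point of this piece of the orbit space has infinite isotropy, the fixed-point set is everything, and the localization yields only $\chi(X)=\chi(X)$. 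Checking that the intersection of $D_{|Z}$ with $\rho(\OO(2))_{|Z}$ is finite in $\HOMEO(Z)$ (which happens to hold here because $r_\psi$ acts as $e^{-\sqrt{-1}\,\alpha\psi}$ on the other component) does not rescue the argument — what the method actually requires is finite isotropy of the induced action on the quotient, and that fails. This is precisely why the paper singles out this case and disposes of it by a direct computation: choosing the unique orbit representative with all $x_{i,1}\neq 0$ and $x_{i_1,1}$ positive real gives $\OO(2)\backslash\boldsymbol{X}_I^\times\cong(0,1)\times(\C\smallsetminus\{0\})^{|I|-1}$, whose Euler characteristic vanishes because $|I|\geq 2$. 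You could instead repair your argument with a circle of mixed weights $(1,\ldots,1,-1)$ on the nonzero coordinates (extended compatibly to both components so that it commutes with the reflections), in the spirit of the proof of Theorem~\ref{thrm:GamECS1Rep}; but as written, the blanket claim that ``one checks this circle has finite intersection'' does not deliver the vanishing of this stratum.
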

\begin{proof}
The orbit space of the origin is a single point with isotropy $\OO(2)$, yielding the term $\chi\big(\OO(2)\backslash\HOM(\Gamma,\OO(2))\big)$.
For $i\in\underline{n}$, the set $\OO(2)\backslash \boldsymbol{X}_i$ is homeomorphic to an open interval; specifically,
each point in $\boldsymbol{X}_i$ contains a unique point in its orbit such that $x_{i,1}$ is positive real and $x_{i,2} = 0$.
Therefore, each $\boldsymbol{X}_i$ contributes $-\chi\big(\HOM(\Gamma,\Z/\alpha_i\Z)\big)$ to $\chi_\Gamma(\OO(2)\ltimes V)$.

For a set $\boldsymbol{X}_I^\times$ corresponding to $I\subseteq\underline{n}$ with $|I|\geq 2$ elements such that the $\alpha_i$
with $i\in I$ all coincide,
$\boldsymbol{X}_I^\times$ is homeomorphic to $(\C\smallsetminus\{0\})^{|I|}\sqcup(\C\smallsetminus\{0\})^{|I|}$, and
$\OO(2)\backslash\boldsymbol{X}_I^\times$ is homeomorphic to $(0,1)\times(\C\smallsetminus\{0\})^{|I|-1}$. To see this,
note that the orbit of each $\boldsymbol{v}\in\boldsymbol{X}_I^\times$ contains a unique point such that $x_{i,1}\neq 0$ for each
$i\in I$ and, if $i_1$ is the smallest element of $I$, then $x_{i_1,1}$ is positive real; for $i\neq i_1$, the $x_{i,1}$ can take
any value in $\C\smallsetminus\{0\}$. As $|I| - 1 \geq 1$, it follows that
$\chi\big(\OO(2)\backslash\boldsymbol{X}_I^\times\big) = \chi\big((0,1)\times(\C\smallsetminus\{0\})^{|I|-1}\big) = 0$.

For any set $Z$ of the form $\boldsymbol{X}_I^\ast$, $\boldsymbol{X}_I$, $\boldsymbol{Y}_J$, or $\boldsymbol{X}_{I,J}$,
or of the form $\boldsymbol{X}_I^\times$ where the $\alpha_i$ for $i\in I$ do not all coincide, the circle action by
scalar multiplication on $V$ preserves $Z$ and has finite intersection with $\OO(2)$ in $\HOMEO(Z)$. Hence the corresponding contribution to
$\chi_\Gamma(\OO(2)\ltimes V)$ is zero by Corollary~\ref{cor:SimpCircleActionDiag}, completing the proof.
\end{proof}

\begin{remark}
\label{rem:GamECO2RepTrivial}
The hypothesis in Theorem~\ref{thrm:GamECO2Rep} that $V^{\OO(2)} = \{0\}$ can easily be removed by noting that, for an arbitrary
unitary $\OO(2)$-representation $W$, $W = W^{\OO(2)} \times V$ with $V$ as in \eqref{eq:genO2Rep}, where $W^{\OO(2)}$ is a complex
vector space so that $\chi\big(W^{\OO(2)}\big) = 1$. Hence, as in Remark~\ref{rem:GamECS1RepReal},
$\chi_\Gamma\big(\OO(2)\ltimes V\big) = \chi_\Gamma\big(\OO(2)\ltimes W\big)$.
\end{remark}

Continuing to let $V$ denote a unitary $\OO(2)$-representation such that $V^{\OO(2)} = \{0\}$ of the form \eqref{eq:genO2Rep},
let $V_\R \subset V$ denote the set of real points in $V$. That is,
\[
    V_\R    =   \{ (x_{1,1},x_{1,2},x_{2,1},\ldots,x_{n,1},x_{n,2},y_1,\ldots,y_d) \in V :
                    \overline{x_{i,1}} = x_{i,2} \;\forall i \text{ and } y_j\in\R \:\forall j\}.
\]
Then $V_\R$ is a real representation of $\OO(2)$. For example, the defining representation of $\OO(2)$ is given by $(\tau_1)_\R$.

\begin{theorem}
\label{thrm:GamECO2RealRep}
Let $\rho\colon\OO(2)\to\U(V)$ be a unitary $\OO(2)$-representation with $V^{\OO(2)} = \{0\}$ of the form \eqref{eq:genO2Rep}
and let $\Gamma$ be a finitely presented discrete group. Then
\begin{equation}
\label{eq:GamECO2RealRep}
\begin{split}
    \chi_\Gamma\big(\OO(2)\ltimes V_\R\big)
    &=          \chi\big(\OO(2)\backslash\HOM(\Gamma,\OO(2))\big)
                + \frac{(-1)^d - 1}{2}\chi\big(\HOM(\Gamma,\Sp^1)\big)
    \\&\quad    - \sum\limits_{\emptyset\neq I\subseteq\underline{n}}
                (-2)^{|I|-1}\chi\big(D_{2\gcd\{\alpha_I\}}\backslash\HOM(\Gamma,D_{2\gcd\{\alpha_I\}})\big)
    \\&\quad    - \sum\limits_{I = \{i_1,i_2\}\subseteq\underline{n}}
                \chi\big(D_{2\gcd\{\alpha_I\}}\backslash\HOM(\Gamma,D_{2\gcd\{\alpha_I\}})\big)
    \\&\quad    + \frac{1- (-1)^d}{2} \sum\limits_{i=1}^n \chi\big(\HOM(\Gamma,\Z/\alpha_i\Z)\big),
\end{split}
\end{equation}
where we use the shorthand $\{ \alpha_I\}$ to denote $\{\alpha_i:i\in I\}$.
\end{theorem}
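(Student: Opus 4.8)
\emph{Proof proposal.}
The plan is to run, for $V_\R$, an orbit-type analysis parallel to the proof of Theorem~\ref{thrm:GamECO2Rep}, but now without the circle action by scalar multiplication, which does not preserve the real locus. First I would record the explicit structure of $V_\R$ as a real $\OO(2)$-representation: $(\tau_{\alpha_i})_\R$ is the $2$-dimensional real representation on which $\SO(2)$ rotates at speed $\alpha_i$ and every reflection acts as an orthogonal reflection, while $(\det)_\R\cong\R$ carries the trivial $\SO(2)$-action with every reflection acting as $-1$; thus $V_\R\cong\bigoplus_{i=1}^n(\tau_{\alpha_i})_\R\oplus\R^d_{\det}$. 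The decisive difference from the complex case is that $|x_{i,1}|=|x_{i,2}|$ holds identically on $V_\R$, so a single nonzero $\tau_{\alpha_i}$-factor always produces reflections in the isotropy of its points unless a $\det$-factor is also switched on; concretely, on the set $\boldsymbol{A}_I$ of points where exactly the $\tau_{\alpha_i}$-factors with $i\in I$ are nonzero, the rotations in the isotropy always form $R(\gcd\{\alpha_I\})$, and a reflection $s_\theta$ fixes a point there precisely when $\alpha_i\theta\equiv 2\arg x_{i,1}\pmod{2\pi}$ for all $i\in I$.

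I would partition $\OO(2)\backslash V_\R$ according to which of the $n$ factors $\tau_{\alpha_i}$ and $d$ factors $\det$ are nonzero: the origin (isotropy $\OO(2)$); for nonempty $J\subseteq\underline{d}$ the set $\boldsymbol{Y}_J$ where only the $\det$-coordinates in $J$ are nonzero (isotropy $\SO(2)$); the sets $\boldsymbol{A}_I$ above; and for nonempty $I$ and $J$ the set $\boldsymbol{C}_{I,J}$ where the $\tau$-factors in $I$ and $\det$-factors in $J$ are nonzero. The origin gives $\chi\big(\OO(2)\backslash\HOM(\Gamma,\OO(2))\big)$. The union $\bigsqcup_{J\neq\emptyset}\boldsymbol{Y}_J=\R^d\smallsetminus\{0\}$, with $\OO(2)$ acting through $\OO(2)/\SO(2)\cong\Z/2\Z$ as $\pm I$, has orbit space $\R P^{d-1}\times(0,\infty)$ of Euler characteristic $\tfrac{(-1)^d-1}{2}$, producing the $\chi\big(\HOM(\Gamma,\Sp^1)\big)$ term. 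For fixed $i$, $\bigsqcup_{J\neq\emptyset}\boldsymbol{C}_{\{i\},J}$ is $\big((\tau_{\alpha_i})_\R\smallsetminus\{0\}\big)\times\big(\R^d\smallsetminus\{0\}\big)$; modding the circle of the first factor by $\SO(2)$ and the remainder by the residual $\Z/2\Z$ (which acts freely on $\R^d\smallsetminus\{0\}$) gives a space of Euler characteristic $\tfrac{1-(-1)^d}{2}$ with isotropy $R(\alpha_i)$, producing the last term; for $|I|\geq2$ the same manipulation leaves a factor $\SO(2)\backslash(\Sp^1)^{|I|}\cong T^{|I|-1}$ of zero Euler characteristic (Theorem~\ref{thrm:TorusLocalization}), so $\boldsymbol{C}_{I,J}$ with $|I|\geq2$ contributes nothing.

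The crux is the family $\boldsymbol{A}_I$, which I would split by isotropy into the dihedral locus $\boldsymbol{A}_I^{\mathrm{refl}}$ (some reflection fixes the point, isotropy $\cong D_{2\gcd\{\alpha_I\}}$) and the cyclic locus $\boldsymbol{A}_I^{\mathrm{cyc}}$ (isotropy $\cong\Z/\gcd\{\alpha_I\}\Z$). In polar coordinates $\boldsymbol{A}_I=(\Sp^1)^{|I|}\times(0,\infty)^{|I|}$ with $\SO(2)$ acting on the torus through the subcircle $S_0=\{(z^{\alpha_i})_{i\in I}:z\in\Sp^1\}$, the condition above shows the angular reflection-locus $R\subseteq(\Sp^1)^{|I|}$ is exactly the preimage of $S_0$ under doubling $\phi\mapsto2\phi$; hence $R$ is a union of $2^{|I|-1}$ cosets of $S_0$, so $\SO(2)\backslash R$ is the $2$-torsion subgroup of $(\Sp^1)^{|I|}/S_0\cong T^{|I|-1}$ and the residual reflection $\iota\colon\phi\mapsto-\phi$ fixes each of its points, whence $\OO(2)\backslash\boldsymbol{A}_I^{\mathrm{refl}}$ is $2^{|I|-1}$ copies of $(0,\infty)^{|I|}$ and $\chi\big(\OO(2)\backslash\boldsymbol{A}_I^{\mathrm{refl}}\big)=(-1)^{|I|}2^{|I|-1}$. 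For $\boldsymbol{A}_I^{\mathrm{cyc}}$ I would first pass to $\SO(2)\backslash\boldsymbol{A}_I^{\mathrm{cyc}}=(0,\infty)^{|I|}\times\big(T^{|I|-1}\smallsetminus\{2^{|I|-1}\text{ points}\}\big)$, use $\chi(T^{|I|-1})=0$ for $|I|\geq2$ (and $\boldsymbol{A}_{\{i\}}^{\mathrm{cyc}}=\emptyset$), and then divide by the residual $\Z/2\Z$, which acts freely on the punctured torus. Feeding the resulting Euler characteristics and weak orbit types into Equation~\eqref{eq:GamECAsSumFinerPartition} yields the claimed formula.

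The hard part will be the final bookkeeping for the $\boldsymbol{A}_I$'s: tracking the residual $\Z/2\Z$-action precisely — whether it acts freely or with fixed points, and whether it permutes or preserves the connected components of the relevant $\SO(2)$-quotients — is exactly what pins down the coefficient $(-2)^{|I|-1}$, the separate correction for $|I|=2$, and which group ($D_{2\gcd\{\alpha_I\}}$ versus $\Z/\gcd\{\alpha_I\}\Z$) is attached to each surviving stratum. An alternative I would try is an inclusion–exclusion over the subsets of $I$: peel off one $\tau_{\alpha_i}$-factor at a time, noting that adjoining a factor doubles the number of components of the reflection-locus $R$ and hence multiplies the relevant Euler characteristic by $-2$, with a boundary effect as $|I|$ passes through $2$.
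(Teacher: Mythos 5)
Your overall strategy is the same as the paper's: partition $V_\R$ according to which $\tau_{\alpha_i}$- and $\det$-coordinates are nonzero, identify the isotropy group on each piece, compute the Euler characteristic of each orbit space, and feed the result into Equation~\eqref{eq:GamECAsSumFinerPartition}. Your treatment of the origin, of $\bigsqcup_J\boldsymbol{Y}_J$, of the $\boldsymbol{C}_{I,J}$, and of the reflection loci $\boldsymbol{A}_I^{\mathrm{refl}}$ (the paper's $\boldsymbol{X}_I^\ast\cap V_\R$) reproduces the paper's computations, and your packaging of the angular analysis --- the reflection locus as the preimage of $S_0$ under doubling, hence the $2$-torsion subgroup of $T^{|I|-1}$ after dividing by $\SO(2)$ --- is cleaner than the paper's coordinate-by-coordinate normalization and is correct.

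The cyclic loci are where your outcome diverges from the stated formula, and you should not expect the ``final bookkeeping'' to close that gap: carried out as you describe, your computation gives, for every $I\subseteq\underline{n}$ with $|I|\geq 2$, a stratum with isotropy $\Z/\gcd\{\alpha_I\}\Z$ and orbit-space Euler characteristic $(-1)^{|I|+1}2^{|I|-2}$, hence a contribution $-(-2)^{|I|-2}\chi\big(\HOM(\Gamma,\Z/\gcd\{\alpha_I\}\Z)\big)$. Equation~\eqref{eq:GamECO2RealRep} instead attaches the dihedral group $D_{2\gcd\{\alpha_I\}}$ to the $|I|=2$ strata and assigns zero to every stratum with $|I|\geq 3$. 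Your version appears to be the correct one: a point of $\boldsymbol{X}_{\{i_1,i_2\}}\cap V_\R$ is by definition fixed by no reflection, so its isotropy is $\Z/\gcd\{\alpha_I\}\Z$ and not $D_{2\gcd\{\alpha_I\}}$; and the paper's vanishing argument for $|I|\geq 3$ rests on the claim that a point outside $\boldsymbol{X}_I^\ast$ must contain a factor fixed by \emph{no} reflection of $D_{2\alpha_{i_1}}$, which fails because distinct factors can each be fixed by some reflection without any common reflection fixing all of them. A concrete check: for $n=3$, $d=0$, $\alpha=(1,1,1)$ and $\Gamma$ trivial, one computes $\chi\big(\OO(2)\backslash\R^6\big)=-1$ directly (for instance by fibering over the first vector and using $\chi(X/\langle\sigma\rangle)=\tfrac{1}{2}\big(\chi(X)+\chi(X^\sigma)\big)$ for the involution $\sigma$ stabilizing it), in agreement with your stratification, whereas the displayed formula evaluates to $-3$. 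So your proposal is methodologically sound, but completed it proves a corrected statement --- with the second sum replaced by $-\sum_{|I|\geq 2}(-2)^{|I|-2}\chi\big(\HOM(\Gamma,\Z/\gcd\{\alpha_I\}\Z)\big)$ over all $I\subseteq\underline{n}$ with at least two elements --- rather than Equation~\eqref{eq:GamECO2RealRep} as written.
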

\begin{proof}
The orbit space of the origin is again a point with isotropy $\OO(2)$, yielding the term $\chi\big(\OO(2)\backslash\HOM(\Gamma,\OO(2))\big)$.
The sets $\boldsymbol{X}_i$ and $\boldsymbol{X}_I^\times$ do not intersect $V_\R$, so we consider the sets $\boldsymbol{X}_I^\ast$
for nonempty $I\subseteq\underline{n}$ with $|I|$ elements.
Let $I = \{i_1,\ldots,i_k\}$ and then the $\OO(2)$-orbit of $(x_{i_1,1},x_{i_1,2})\in(\tau_{i_1})_\R$ contains a unique element with $x_{i_1,1} = x_{i_1,2}$
positive real, which has isotropy $D_{2\alpha_{i_1}}$. In order to be an element of $\boldsymbol{X}_I^\ast$,
$(x_{i_2,1},x_{i_2,2})\in(\tau_{i_2})_\R$ must be contained in one of $\alpha_{i_1}/\gcd(\alpha_{i_1},\alpha_{i_2})$ lines through the origin
(excluding the origin),
and the action of $D_{2\alpha_{i_1}}$ identifies these lines. Hence the $D_{2\alpha_{i_1}}$-orbit of $(x_{i_2,1},x_{i_2,2})$ contains a unique
element with $x_{i_2,1}$ on one of two rays (excluding the origin), the positive real axis and the set of points with argument
$\pi\gcd(\alpha_{i_1},\alpha_{i_2})/\alpha_{i_1}$; such points
$(x_{i_1,1},x_{i_1,2},x_{i_2,1},x_{i_2,2})\in(\tau_{i_1} \oplus \tau_{i_2})_\R$ have isotropy $D_{2\gcd(\alpha_{i_1},\alpha_{i_2})}$.
In the same way, the $D_{2\gcd(\alpha_{i_1},\alpha_{i_2})}$-orbit of $(x_{i_3,1},x_{i_3,2})$ contains a unique element on one of two rays
excluding the origin. Continuing by induction, the orbit space of $\boldsymbol{X}_I^\ast$ is homeomorphic to
$(0,1)\times \big((0,1)\sqcup(0,1)\big)^{|I|-1}$. It therefore
has Euler characteristic $-(-2)^{|I|-1}$ and isotropy $D_{2\gcd\{\alpha_i:i\in I\}}$, yielding the first sum in
Equation~\eqref{eq:GamECO2RealRep}.

The set $\boldsymbol{X}_I$ only intersects $V_\R$ if $I$ has at least two elements, as any real point $(x_{i,1},x_{i,2})$ satisfies
$|x_{i,1}| = |x_{i,2}|$ and hence is an element of $\boldsymbol{X}_I^\ast$. First assume $I = \{i_1,i_2\}$ has two elements, and then as
in the case of $\boldsymbol{X}_I^\ast$, there is a unique point in the $\OO(2)$-orbit of $(x_{i_1,1},x_{i_1,2})\in(\tau_{i_1})_\R$ such that
$x_{i_1,1} = x_{i_1,2}$ is positive real; this point has isotropy $D_{2\alpha_{i_1}}$. Then $x_{i_2,1}$ must lie in the complement of the
$\alpha_{i_1}/\gcd(\alpha_{i_1},\alpha_{i_2})$ lines through the origin fixed by a reflection in $D_{2\alpha_{i_1}}$. This separates $\C$
into $2\alpha_{i_1}/\gcd(\alpha_{i_1},\alpha_{i_2})$ open $2$-cells, and there is a unique point in the $D_{2\alpha_{i_1}}$-orbit of
$(x_{i_2,1},x_{i_2,2})\in(\tau_{i_2})_\R$ contained in the $2$-cell bounded by the positive real axis and the ray of points with
argument $\pi\gcd(\alpha_{i_1},\alpha_{i_2})/\alpha_{i_1}$. It follows that the $\OO(2)$-orbit space of $\boldsymbol{X}_I$ is homeomorphic to
$(0,1)\times\C$ and hence $\chi\big(\OO(2)\backslash\boldsymbol{X}_I\big) = -1$ with isotropy $D_{2\gcd(\alpha_{i_1},\alpha_{i_2})}$.
Now suppose $I$ has at least three elements, let $i_1$ be the smallest, and then there must be at least one other element $i_2$ such that
$(x_{i_2,1},x_{i_2,2})\in(\tau_{i_2})_\R$ is not fixed by any of the reflections in $D_{2\alpha_{i_1}}$ (otherwise, the point is in
$\boldsymbol{X}_I^\ast$). Choose a third element $i_3\in I$, and then $x_{i_3,1}$ is unconstrained in $\C\smallsetminus\{0\}$.
Arguing as in the case $I$ has two elements, there is a unique point in the $\OO(2)$-orbit of
$(x_{i_1,1},x_{i_1,2},x_{i_2,1},x_{i_2,2},x_{i_3,1},x_{i_3,2})\in(\tau_{i_1}\oplus\tau_{i_2}\oplus\tau_{i_3})_\R$ such that
$x_{i_1,1}$ is positive real, $x_{i_2,1}$ is contained in a single open $2$-cell, and $x_{i_3,1}$ is an element of
$(\Z/\gcd(\alpha_{i_1},\alpha_{i_2},\alpha_{i_2})\Z)\backslash(\C\smallsetminus\{0\})$ with the cyclic group acting as rotations.
It follows that the $\OO(2)$-orbit space of $\boldsymbol{X}_I$ is homeomorphic to
$(0,1)\times\C\times(\C\smallsetminus\{0\})\times\cdots$ and hence has vanishing Euler characteristic. Note that if $I$ has more than
three elements, the remaining factors contributed by each will each be homeomorphic $\C\smallsetminus\{0\}$ by the same argument as in the case
of $x_{i_3}$. It follows that the $\boldsymbol{X}_I$ contribute the second sum in Equation~\eqref{eq:GamECO2RealRep}.

Now let $J\subseteq\underline{d}$ be nonempty with $|J|$ elements, and then $\boldsymbol{Y}_J$ is homeomorphic to $(\R\smallsetminus\{0\})^{|J|}$.
The action of $\OO(2)$ is by scalar multiplication by $-1$, so the orbit space $\OO(2)\backslash\boldsymbol{Y}_J$ is homeomorphic to
$\R\times\big(\R\smallsetminus\{0\}\big)^{|J|-1}$ and has Euler characteristic $-(-2)^{|J|-1}$. These points have abelian isotropy $\Sp^1$ so that
the $\boldsymbol{Y}_J$ contribute
\begin{align*}
    - \sum\limits_{\emptyset\neq J\subseteq\underline{d}} (-2)^{|J|-1}\chi\big(\HOM(\Gamma,\Sp^1)\big)
    &=      - \chi\big(\HOM(\Gamma,\Sp^1)\big) \sum\limits_{|J|=1}^d {d \choose |J|} (-2)^{|J|-1}
    \\&=    \frac{(-1)^d - 1}{2}\chi\big(\HOM(\Gamma,\Sp^1)\big)
\end{align*}
to $\chi_\Gamma\big(\OO(2)\ltimes V_\R\big)$.

It remains only to consider the $\boldsymbol{X}_{I,J}$ for nonempty sets $I\subset\underline{n}$ and $J\subset\underline{d}$.
First assume $I = \{i\}$ is a singleton and $J$ is arbitrary with $|J|>0$ elements, and let $j_1$ be the smallest element of $J$.
The $\OO(2)$-orbit of $(x_{i,1},x_{i,2},y_{j_1})$ contains a unique point with $x_{i,1}$ positive real and $y_{j_1}$ positive,
and the isotropy group $\Z/\alpha_i\Z$ of this point acts trivially on the remaining coordinates $y_j$ for $j\in J$. It follows that
the orbit space $\OO(2)\backslash\boldsymbol{X}_{I,J}$ of $\boldsymbol{X}_{I,J}$ is homeomorphic to
$\R^2\times\big(\R\smallsetminus\{0\}\big)^{|J|-1}$ and hence has Euler characteristic $(-2)^{|J|-1}$, contributing
$(-2)^{|J|-1}\chi\big(\HOM(\Gamma,\Z/\alpha_i\Z)\big)$ to $\chi_\Gamma\big(\OO(2)\ltimes V_\R\big)$.
If $I = \{i_1,i_2,\ldots\}$ has at least two elements, then choosing an $\OO(2)$-orbit representative such that
$x_{i_1,1}$ is positive real and $y_{j_1} > 0$, $x_{i_2,1}$ can take any nonzero value in $\C\smallsetminus\{0\}$, and the
$\Z/\alpha_{i_1}\Z$-action on $\C\smallsetminus\{0\}$ results in a $(\Z/\alpha_{i_1}\Z)\backslash(\C\smallsetminus\{0\})$
factor in the orbit space homeomorphic to $\C\smallsetminus\{0\}$.
It follows that $\OO(2)\backslash\boldsymbol{X}_{I,J}$ has vanishing Euler characteristic. Hence, the contribution to
to $\chi_\Gamma\big(\OO(2)\ltimes V_\R\big)$ of the sets $\boldsymbol{X}_{I,J}$ is given by
\begingroup
\allowdisplaybreaks
\begin{align*}
    \sum\limits_{i=1}^n \sum\limits_{\emptyset\neq J\subseteq\underline{d}} (-2)^{|J|-1} & \chi\big(\HOM(\Gamma,\Z/\alpha_i\Z)\big)
    \\&=    \sum\limits_{i=1}^n \chi\big(\HOM(\Gamma,\Z/\alpha_i\Z)\big) \sum\limits_{|J|=1}^d {d \choose |J|} (-2)^{|J|-1}
    \\&=    \sum\limits_{i=1}^n \frac{1- (-1)^d}{2} \chi\big(\HOM(\Gamma,\Z/\alpha_i\Z)\big).
    \qedhere
\end{align*}
\endgroup
\end{proof}


\subsection{Computations for $\OO(2)$-representations with free and free abelian $\Gamma$}
\label{subsec:O2RepsFreeGroups}

To specialize the computations of Section~\ref{subsec:O2RepsGenGamma} to the groups $\Gamma = \Z^\ell$ and $\F_\ell$, we need only compute
$\chi\big(H\backslash\HOM(\Gamma,H)\big)$ for the isotropy groups $H$ that appear in Equations~\eqref{eq:GamECO2Rep} and \eqref{eq:GamECO2RealRep}.
For abelian $H$, this computation is trivial and was considered in Section~\ref{sec:CircleReps}; we have
$\chi\big(\Sp^1\backslash\HOM(\Gamma,\Sp^1)\big) = \chi\big(\HOM(\Gamma,\Sp^1)\big) = 0$ and
$\chi\big((\Z/\alpha\Z)\backslash\HOM(\Gamma,(\Z/\alpha\Z))\big) = \chi\big(\HOM(\Gamma,(\Z/\alpha\Z))\big) = \alpha^\ell$ for both
$\Gamma = \Z^\ell$ and $\Gamma = \F_\ell$. We now consider the remaining cases.

By choosing a generating set for $\F_\ell$, $\HOM(\F_\ell,H)$ can be identified with the set of ordered $\ell$-tuples of elements of $H$,
and the action of $\F_\ell$ on $\HOM(\F_\ell,H)$ corresponds to simultaneous conjugation of an $\ell$-tuple.
The partition of the set of $\ell$-tuples of elements of $\OO(2)$ into $\OO(2)$-orbit types under simultaneous conjugation is as follows.
Note that the center of $\OO(2)$ is given by $\{r_0,r_\pi\}$, the centralizer of any other rotation is $\SO(2)$, and the centralizer of a reflection
$s_\theta$ is $\{r_0,r_\pi,s_\theta,r_\pi s_\theta = s_{\theta+\pi}\}$; all $s_\theta$ are conjugate, and $r_\theta$ is conjugate only to $r_{-\theta}$.
\begin{enumerate}
\item[(i)]      $\ell$-tuples $(g_1,\ldots,g_\ell)\in\OO(2)$ such that each $g_i\in\{r_0,r_\pi\}$.
                These $\ell$-tuples are fixed by $\OO(2)$.
\item[(ii)]     $\ell$-tuples $(g_1,\ldots,g_\ell)\in\OO(2)$ such that each $g_i\in\SO(2)$ and at least one $g_i\notin\{r_0,r_\pi\}$.
                The isotropy group of these $\ell$-tuples is $\SO(2)$.
\item[(iii)]    $\ell$-tuples $(g_1,\ldots,g_\ell)\in\OO(2)$ in which a single reflection $s_\theta$ occurs at least once and all $g_i$ commute
                with $s_\theta$.
                The isotropy group of any such $\ell$-tuple is $\{r_0,r_\pi,s_\theta,s_{\theta+\pi}\}$.
\item[(iv)]    $\ell$-tuples $(g_1,\ldots,g_\ell)\in\OO(2)$ fixed only by the center $\{r_0,r_\pi\}$ of $\OO(2)$. These $\ell$-tuples
                contain at least one reflection $s_\theta$ and at least one $g_i$ that does not commute with $s_\theta$.
                These $\ell$-tuples have isotropy $\{r_0,r_\pi\}$.
\end{enumerate}
The $\ell$-tuples of types (i), (ii), and (iii) are commuting $\ell$-tuples and hence represent elements of $\HOM(\Z^\ell,\OO(2))$, while the
$\ell$-tuples of type (iv) do not commute.

\begin{lemma}
\label{lem:ZlFlO2}
For any $\ell\geq 1$,
\begin{equation}
\label{eq:ZlO2}
    \chi\big(\OO(2)\backslash\HOM(\Z^\ell,\OO(2))\big)
        =   2^{2\ell-1},
\end{equation}
and for $\ell\geq 2$, we have
\begin{equation}
\label{eq:FlO2}
    \chi\big(\OO(2)\backslash\HOM(\F_\ell,\OO(2))\big)
        =   2^{\ell-2}(2^\ell + 1).
\end{equation}
\end{lemma}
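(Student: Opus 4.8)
The plan is to evaluate $\chi$ of each orbit space by cutting it into definable pieces along the orbit-type partition listed above and summing, using additivity and multiplicativity of the definable Euler characteristic together with the observation (implicit in the proof of Theorem~\ref{thrm:TorusLocalization}) that a free definable action of a finite group $F$ on a definable set $Z$ makes $Z\to F\backslash Z$ a definable $|F|$-fold cover, so $\chi(Z) = |F|\,\chi(F\backslash Z)$.

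First I would dispose of the commuting tuples. Equivariantly, $\HOM(\Z^\ell,\OO(2)) = \SO(2)^\ell \sqcup T$, where $T$ is the set of type (iii) tuples: a commuting tuple containing a reflection $s_\theta$ has all entries in $C_{\OO(2)}(s_\theta) = \{r_0,r_\pi,s_\theta,s_{\theta+\pi}\} \cong \Z/2\times\Z/2$. On $\SO(2)^\ell \cong \T^\ell$ the group $\OO(2)$ acts through $\OO(2)/\SO(2)\cong\Z/2$ by simultaneous inversion; splitting off the $2^\ell$ fixed points $\{r_0,r_\pi\}^\ell$ and applying the covering observation to the free complement (of Euler characteristic $\chi(\T^\ell)-2^\ell = -2^\ell$) gives $\chi\big(\OO(2)\backslash\SO(2)^\ell\big) = 2^\ell - 2^{\ell-1} = 2^{\ell-1}$. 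The set $T$ is a single orbit type, with isotropy a Klein four-group $K$, so by \cite[Thrm.~4.3.10]{PflaumBook}, $\OO(2)\backslash T \cong N_{\OO(2)}(K)\backslash T_K$ where $T_K = \{(g_i)\in K^\ell : \text{some } g_i \text{ is a reflection}\}$ is a finite set of $4^\ell - 2^\ell$ points. As $K$ is abelian it acts trivially, and $N_{\OO(2)}(K)$ acts through a quotient isomorphic to $\Z/2$ (conjugation by $r_{\pi/2}$ modulo $K$) that swaps the two reflections of $K$ in each coordinate, hence freely on $T_K$; so $\chi(\OO(2)\backslash T) = (4^\ell-2^\ell)/2 = 2^{2\ell-1}-2^{\ell-1}$. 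Adding the two contributions yields \eqref{eq:ZlO2}.

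For \eqref{eq:FlO2} I would use $\HOM(\F_\ell,\OO(2)) = \OO(2)^\ell = \SO(2)^\ell \sqcup \big(\OO(2)^\ell\smallsetminus\SO(2)^\ell\big)$; the first piece again contributes $2^{\ell-1}$. I would stratify the second by the nonempty set $S\subseteq\underline{\ell}$ of coordinates equal to a reflection, each stratum $A_S$ being $\OO(2)$-invariant. Because $\OO(2)$ acts transitively on reflections with stabilizer $C_{\OO(2)}(s_0)\cong\Z/2\times\Z/2$, fixing the value of the first reflection reduces $\OO(2)\backslash A_S$ to the quotient of $(\OO(2)\smallsetminus\SO(2))^{|S|-1}\times\SO(2)^{\ell-|S|}$ by $C_{\OO(2)}(s_0)$, which acts through a $\Z/2$ (conjugation by $s_0$: $r_\phi\mapsto r_{-\phi}$, $s_\psi\mapsto s_{-\psi}$). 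Splitting off the $2^{\ell-1}$ fixed points $\{s_0,s_\pi\}^{|S|-1}\times\{r_0,r_\pi\}^{\ell-|S|}$, and using that for $\ell\geq 2$ the product $(\OO(2)\smallsetminus\SO(2))^{|S|-1}\times\SO(2)^{\ell-|S|}$ always retains a circle factor and hence has Euler characteristic $0$, I get $\chi(\OO(2)\backslash A_S) = 2^{\ell-1}-2^{\ell-2} = 2^{\ell-2}$ for every $S$. Summing over the $2^\ell - 1$ nonempty $S$ gives $\chi\big(\OO(2)\backslash(\OO(2)^\ell\smallsetminus\SO(2)^\ell)\big) = (2^\ell-1)2^{\ell-2}$, and adding $2^{\ell-1}$ produces $2^{2\ell-2}+2^{\ell-2} = 2^{\ell-2}(2^\ell+1)$.

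The step I expect to be the real obstacle is the non-commuting (type (iv)) stratum, whose isotropy is the center $\{r_0,r_\pi\}$: it acts trivially, so $\OO(2)$ acts on that stratum through the positive-dimensional quotient $\OO(2)/\{r_0,r_\pi\}$, and the shortcut of dividing by the order of a finite group is unavailable — this is what forces the coordinate-support stratification by the sets $A_S$, which cuts across orbit types, in place of an orbit-type-by-orbit-type computation. The remaining care is routine: one checks that all sets in sight (the strata $A_S$, the various fixed-point sets, and the fibrations used to reduce to a fixed stabilizer) are definable and that the identifications $\OO(2)\backslash E \cong (\text{stabilizer})\backslash(\text{fiber})$ are definable homeomorphisms, so additivity and the covering-space multiplicativity of $\chi$ apply; the contrast between $\ell=1$ and $\ell\geq 2$ enters precisely where a leftover circle factor forces a vanishing Euler characteristic, which is why \eqref{eq:FlO2} requires $\ell\geq 2$.
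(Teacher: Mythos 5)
Your computation is correct, and for Equation~\eqref{eq:FlO2} it follows a genuinely different route from the paper's. The paper stratifies $\OO(2)^\ell$ strictly by conjugation orbit type (its types (i)--(iv)) and handles the non-commuting stratum by an induction on $\ell$, building a type (iv) $(\ell+1)$-tuple by concatenating a new entry onto a type (ii), (iii), or (iv) $\ell$-tuple and tracking how the centralizer of the chosen representative constrains the new coordinate. You instead stratify $\OO(2)^\ell\smallsetminus\SO(2)^\ell$ by the set $S$ of reflection coordinates -- a partition that deliberately cuts across orbit types, merging the type (iii) and type (iv) tuples with the same reflection support -- and then use the equivariant projection onto the first reflection coordinate, $A_S\to\OO(2)/C_{\OO(2)}(s_0)$, to reduce $\OO(2)\backslash A_S$ to a quotient of the fiber by the Klein four-group; since each stratum contributes the same $2^{\ell-2}$, no induction is needed. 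Your diagnosis of why the orbit-type-by-orbit-type shortcut fails on the type (iv) stratum (finite, indeed central, isotropy forces a quotient by a positive-dimensional group) is exactly the obstruction the paper's induction is designed to circumvent, and your slice reduction buys a closed-form, one-pass argument at the cost of having to verify the fibration and the $C_{\OO(2)}(s_0)$-action on the fiber. For Equation~\eqref{eq:ZlO2} the two arguments are essentially the same count, organized differently: the paper chooses explicit orbit representatives with first reflection $s_0$, while you invoke the identification $\OO(2)\backslash T_{(K)}\cong N_{\OO(2)}(K)\backslash T_K$ and a free $\Z/2$-covering; both give $2^{\ell-1}(2^\ell-1)$ orbits of commuting tuples containing a reflection. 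All the auxiliary facts you use (additivity of the definable $\chi$, $\chi(Z)=|F|\chi(F\backslash Z)$ for free finite definable actions, and the definability of the strata and fibrations) are standard in this setting and consistent with how the paper itself argues, so the proof is complete as outlined.
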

\begin{proof}
We consider the orbits of $\ell$-tuples of each type listed above and compute
$\chi\big(\OO(2)\backslash\HOM(\Gamma,\OO(2))\big)$ for the $\Gamma$ under consideration.

As the center of $\OO(2)$ has two elements, there are $2^\ell$ $\ell$-tuples of type (i), and the $\OO(2)$-orbit of each is a singleton.
The space of $\ell$-tuples of types (i) and (ii) combined form the space $\SO(2)^\ell$, and those of type (i) yield $2^\ell$ discrete
points in $\SO(2)^\ell$. Hence, the $\ell$-tuples of type (ii) consist of the complement of these points, a space with Euler characteristic
$-2^\ell$. Each type (ii) $\ell$-tuple is centralized by $\SO(2)$ and hence has an orbit with two elements; it follows that the space of
type (ii) tuples forms a $2$-to-$1$ cover of its orbit space, which therefore has Euler characteristic $-2^{\ell-1}$. The space of orbits of
$\ell$-tuples of type (iii) is discrete; although there are infinitely many choices for $s_\theta$, they are all conjugate, so we can for instance
choose the unique representative of each orbit such that the first reflection to occur in the $\ell$-tuple is $s_0$.
To count the resulting orbits of $\ell$-tuples, let $r$ be the first position for which $g_r$ is a reflection. There are $2$ choices for each
$g_i$ with $i < \ell$ and $4$ choices for each $g_i$ with $i > r$, yielding
$\sum_{r=1}^\ell 2^{r-1} 4^{\ell-r} = 2^{\ell-1}(2^\ell - 1)$ orbits. Because $\HOM(\Z^\ell,\OO(2))$ is identified with the space of commuting
$\ell$-tuples, i.e., those of types (i), (ii), and (iii), we have
\[
    \chi\big(\OO(2)\backslash\HOM(\Z^\ell,\OO(2))\big)
    =
    2^\ell
    -
    2^{\ell-1}
    +
    2^{\ell-1}(2^\ell - 1)
    =
    2^{2\ell-1},
\]
establishing Equation~\eqref{eq:ZlO2}.

If $\ell = 1$, then there are no $\ell$-tuples of type (iv), as any singleton containing a reflection is type (iii). We claim by induction
on $\ell$ that if $\ell\geq 2$, then the space of $\OO(2)$-orbits of $\ell$-tuples of type (iv) has Euler characteristic
$-2^{\ell-2}(2^\ell - 1)$. First consider $\ell = 2$. A type (iv) tuple must contain a reflection, and there is a unique element of each orbit
such that the first reflection that occurs is $s_0$ and the other element is either a rotation $r_\theta$ or a reflection $s_\theta$
with $\theta\in(0,\pi)$. The orbit space is therefore three intervals: the tuples of the form $(s_0, s_\theta)$, those of the form
$(s_0, r_\theta)$, and those of the form $(r_\theta, s_0)$, so that the Euler characteristic is $-3 = -2^{\ell-2}(2^\ell - 1)$.

Now, let $X$ be the space of $\ell$-tuples of type (iv) for some $\ell\geq 2$, and assume $\chi(\OO(2)\backslash X) = -2^{\ell-2}(2^\ell - 1)$.
If $(g_1,\ldots,g_{\ell+1})$ is a type (iv) $(\ell+1)$-tuple, then $(g_1,\ldots,g_\ell)$ must be type (ii), (iii), or (iv); we first consider
the case that $(g_1,\ldots,g_\ell)$ is type (iv). Choosing a representative of each orbit in $\OO(2)\backslash X$ and concatenating any
$g_{\ell+1}\in\OO(2)$ to this representative yields a representative of the orbit of a type (iv) $(\ell+1)$-tuple. Therefore, the set of
orbits of type (iv) $(\ell+1)$-tuples such that $(g_1,\ldots,g_\ell)$ is as well type (iv) is homeomorphic to $(\OO(2)\backslash X)\times\OO(2)$
and has zero Euler characteristic.

As seen above, the space of orbits of type (ii) $\ell$-tuples has Euler characteristic $-2^{\ell-1}$, and each is centralized by $\SO(2)$.
Choose a representative $(g_1,\ldots,g_\ell)$ of each orbit, and then we may concatenate any reflection $g_{\ell+1} = s_\theta$ to yield a
type (iv) $(\ell+1)$-tuple. Conjugating by $\SO(2)$, there is a unique representative of the resulting $(\ell+1)$-tuple such that
$g_{\ell+1} = s_0$. It follows that the space of orbits of type (iv) $(\ell+1)$-tuples whose first $\ell$ coordinates are type (ii) has
Euler characteristic $-2^{\ell-1}$.

Similarly, the space consisting of orbits of type (iii) $\ell$-tuples has Euler characteristic $2^{\ell-1}(2^\ell - 1)$. We can choose a unique
representative of each orbit as above such that the first reflection to occur is $s_0$, and the centralizer of this $\ell$-tuple is
$\{r_0,r_\pi,s_0,s_\pi\}$. We may form a type (iv) $(\ell+1)$-tuple by concatenating any element of $\OO(2)$ that is not in this centralizer,
and then we may conjugate by $s_0$ to choose a unique element of the orbit of the resulting $(\ell+1)$-tuple such that
$g_{\ell+1} = s_\theta$ or $r_\theta$ with $\theta\in(0,\pi)$. It follows that the Euler characteristic of the space of $(\ell+1)$-tuples
formed in this way is $-2^\ell(2^\ell - 1)$. Combining the above computations, the space of orbits of type (iv) $(\ell+1)$-tuples has
Euler characteristic
\[
    -2^\ell(2^\ell - 1) - 2^{\ell-1} = - 2^{\ell-1}(2^{\ell+1}-1),
\]
completing the induction.

With this, Equation~\eqref{eq:FlO2} is obtained by adding $-2^{\ell-2}(2^\ell - 1)$, the Euler characteristic of the space of
orbits of type (iv) $\ell$-tuples, to Equation~\eqref{eq:ZlO2}.
\end{proof}

Combining Lemma~\ref{lem:ZlFlO2} with Theorem~\ref{thrm:GamECO2Rep} yields the following.

\begin{corollary}
\label{cor:GamECO2RepFlZl}
Let $\rho\colon\OO(2)\to\U(V)$ be a unitary $\OO(2)$-representation of the form \eqref{eq:genO2Rep} such that $V^{\OO(2)} = \{0\}$.
For any $\ell\geq 1$,
\[
    \chi_{\Z^\ell}\big(\OO(2)\ltimes V\big)
    =   2^{2\ell-1} - \sum\limits_{i=1}^n \alpha_i^\ell,
\]
and for $\ell \geq 2$,
\[
    \chi_{\F_\ell}\big(\OO(2)\ltimes V\big)
    =   2^{\ell-2}(2^\ell + 1) - \sum\limits_{i=1}^n \alpha_i^\ell.
\]
\end{corollary}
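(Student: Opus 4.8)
The plan is simply to specialize Theorem~\ref{thrm:GamECO2Rep} to the cases $\Gamma = \Z^\ell$ and $\Gamma = \F_\ell$, using Lemma~\ref{lem:ZlFlO2} to evaluate the one ingredient that is not immediate. Theorem~\ref{thrm:GamECO2Rep} already expresses $\chi_\Gamma(\OO(2)\ltimes V)$ as $\chi\big(\OO(2)\backslash\HOM(\Gamma,\OO(2))\big)$ minus $\sum_{i=1}^n \chi\big(\HOM(\Gamma,\Z/\alpha_i\Z)\big)$ under exactly the hypothesis $V^{\OO(2)}=\{0\}$ in force here, so it suffices to compute each of these two types of term for the two groups under consideration.

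For the finite cyclic isotropy groups I would observe that $\Z/\alpha_i\Z$ is abelian, so every homomorphism $\F_\ell\to\Z/\alpha_i\Z$ factors through the abelianization $\Z^\ell$; fixing a generating set identifies both $\HOM(\Z^\ell,\Z/\alpha_i\Z)$ and $\HOM(\F_\ell,\Z/\alpha_i\Z)$ with $(\Z/\alpha_i\Z)^\ell$, a finite discrete set of $\alpha_i^\ell$ points. Hence $\chi\big(\HOM(\Gamma,\Z/\alpha_i\Z)\big) = \alpha_i^\ell$ both for $\Gamma = \Z^\ell$ and for $\Gamma = \F_\ell$. For the $\OO(2)$-term, Equation~\eqref{eq:ZlO2} of Lemma~\ref{lem:ZlFlO2} supplies $\chi\big(\OO(2)\backslash\HOM(\Z^\ell,\OO(2))\big) = 2^{2\ell-1}$ for every $\ell\geq 1$, while Equation~\eqref{eq:FlO2} supplies $\chi\big(\OO(2)\backslash\HOM(\F_\ell,\OO(2))\big) = 2^{\ell-2}(2^\ell+1)$ for $\ell\geq 2$.

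Substituting these values into the formula of Theorem~\ref{thrm:GamECO2Rep} then yields both displayed identities at once. I do not expect any genuine obstacle; the only matters requiring attention are bookkeeping ones: confirming that the hypothesis $V^{\OO(2)}=\{0\}$ is precisely what allows Theorem~\ref{thrm:GamECO2Rep} to be invoked verbatim, and noting that the restriction $\ell\geq 2$ in the free-group statement is exactly the range in which Lemma~\ref{lem:ZlFlO2} provides Equation~\eqref{eq:FlO2} (for $\ell = 1$ one has $\F_1 = \Z$, and the two formulas coincide, giving $2 - \sum_{i=1}^n \alpha_i$).
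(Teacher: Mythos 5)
Your proposal is correct and is exactly the paper's argument: the corollary is obtained by substituting the values from Lemma~\ref{lem:ZlFlO2} and the observation that $\chi\big(\HOM(\Gamma,\Z/\alpha_i\Z)\big)=\alpha_i^\ell$ for $\Gamma=\Z^\ell$ or $\F_\ell$ (recorded at the start of Section~\ref{subsec:O2RepsFreeGroups}) into Theorem~\ref{thrm:GamECO2Rep}. Your parenthetical on $\ell=1$ is also the right reading of the restriction $\ell\geq 2$: one has $\F_1=\Z$, so $\chi_{\F_1}$ is given by the $\Z^\ell$ formula with $\ell=1$, namely $2-\sum_{i=1}^n\alpha_i$.
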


\begin{remark}
\label{rem:FreeECO2RepDependence}
As in Remark~\ref{rem:FreeECS1RepDependence}, observe that the $\chi_{\Z^\ell}\big(\OO(2)\ltimes V\big)$ for $\ell=1,\ldots,n$
determine the $\alpha_i$, and the same holds for the $\chi_{\F_\ell}\big(\OO(2)\ltimes V\big)$. Hence, among unitary $\OO(2)$-representations
of the same dimension, finitely many of the $\chi_{\Z^\ell}\big(\OO(2)\ltimes V\big)$ or $\chi_{\F_\ell}\big(\OO(2)\ltimes V\big)$
determine the representation.
\end{remark}

\begin{remark}
\label{rem:ExactSeq}
With $V$ as in Theorem~\ref{thrm:GamECO2Rep}, note that $\OO(2)\ltimes V$ fits into a short exact sequence of Lie groupoids
\[
    \mathbf{1}_V      \longrightarrow
    \Sp^1\ltimes V  \overset{\nu}{\longrightarrow}
    \OO(2)\ltimes V \overset{\kappa}{\longrightarrow}
    \Z/2\Z\ltimes (\Sp^1\backslash V) \longrightarrow
    \mathbf{1}_V
\]
where $\mathbf{1}_V$ denotes the base groupoid $V\rightrightarrows V$ over $V$ in which all arrows are units,
$\nu$ is the embedding of the connected component containing the units, and
$\kappa$ is the quotient map. The $\Sp^1$-action has weight vector
$(\alpha_1,-\alpha_1,\ldots,\alpha_n,-\alpha_n, \overset{d}{\overbrace{0,\ldots,0}})$
as noted after Equation~\eqref{eq:genO2Rep}. The nontrivial element of $\Z/2\Z$ acts
on $\Sp^1\backslash V$ by
\[
    \Sp^1(x_{1,1},x_{1,2},\ldots,x_{n,1},x_{n,2},y_1,\ldots,y_d)
        \mapsto \Sp^1(x_{1,2},x_{1,1},\ldots,x_{n,2},x_{n,1},-y_1,\ldots,-y_d).
\]
The action of $\Sp^1$ on $V$ by scalar multiplication fixes only the origin and commutes with the $\OO(2)$-action
so that by Corollary~\ref{cor:SimpTorusActionOSDGroupoid},
\[
    \chi_\Gamma\big(\Z/2\Z\ltimes (\Sp^1\backslash V)\big)    =   \chi\big(\HOM(\Gamma,\Z/2\Z)\big)
\]
for each finitely presented $\Gamma$. In particular, for each positive integer $\ell$, it follows that
$\chi_{\Z^\ell}\big(\Z/2\Z\ltimes (\Sp^1\backslash V)\big) =
\chi_{\F_\ell}\big(\Z/2\Z\ltimes (\Sp^1\backslash V)\big) = 2^\ell$.
We have $\chi_\Gamma(\mathbf{1}_V) = 1$ for all $\Gamma$, and by Corollary~\ref{cor:FreeECS1Rep},
\[
    \chi_{\Z^\ell}(\Sp^1\ltimes V)  =   \chi_{\F_\ell}(\Sp^1\ltimes V)  =   -2\sum\limits_{i=1}^n \alpha_i^\ell.
\]
In \cite[Theorem~5.13]{FarsiSeatonEC}, it was demonstrated that $\chi_\Gamma$ is multiplicative over short exact sequences
$1\to\mathbf{B}\to\G\to\mathbf{H}\to 1$ where $\mathbf{B}$ is a bundle of compact Lie groups, $\mathbf{H}$ is a translation
groupoid, and the isotropy groups in $\G$ are abelian; \cite[Example~14]{FarsiSeatonEC} then demonstrated that there is no
simple relationship between the $\chi_\Gamma$ when the assumption that the $\G_x^x$ are abelian was dropped. Comparing
the above $\chi_\Gamma$ to Corollary~\ref{cor:GamECO2RepFlZl} further indicates that multiplicativity of $\chi_\Gamma$
over short exact sequences only holds in specific circumstances. In particular, $\chi_{\F_\ell}$ and $\chi_{\Z^\ell}$
coincide for $\mathbf{1}_V$, $\Sp^1\ltimes V$, and $\Z/2\Z\ltimes (\Sp^1\backslash V)$ yet do not for
$\OO(2)\ltimes V$.
\end{remark}

We now consider $\chi\big(D_{2m}\backslash\HOM(\Z^\ell,D_{2m})\big)$.
The center of $D_{2m}$ is $\{r_0,r_\pi\}$ if $m$ is even and trivial if $m$ is odd. If $m$ is even, there are two conjugacy
classes of reflections; if $m$ is odd, all reflections are conjugate. For any $m$, the $D_{2m}$-conjugacy class of any rotation
$r_\theta$ with $\theta\neq 0,\pi$ is the pair $r_{\pm\theta}$.

\begin{lemma}
\label{lem:ZlFlDm}
Let $m$ be any positive integer and let $P(m) = 2$ if $m$ is even and $1$ if $m$ is odd. For any $\ell\geq 1$,
\begin{equation}
\label{eq:DmZlO2}
    \chi\big(D_{2m}\backslash\HOM(\Z^\ell,D_{2m})\big)
        =   \frac{m^\ell + P(m)^\ell(2^{\ell+1} - 1)}{2},
\end{equation}
and for $\ell\geq 2$, we have
\begin{equation}
\label{eq:DmFlO2}
    \chi\big(D_{2m}\backslash\HOM(\F_\ell,D_{2m})\big)
        =   \frac{(2P(m))^{\ell} + P(m) m^{\ell-1}(2^{\ell} - 1) + m^\ell }{2}.
\end{equation}
\end{lemma}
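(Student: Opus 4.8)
The plan is to use that $D_{2m}$ is finite: for $\Gamma = \Z^\ell$ or $\F_\ell$ the space $\HOM(\Gamma, D_{2m})$ (with the compact-open topology) is a finite discrete set, and the conjugation action of the finite group $D_{2m}$ has finite discrete quotient, so $\chi\big(D_{2m}\backslash\HOM(\Gamma,D_{2m})\big)$ equals the number of $D_{2m}$-conjugacy orbits. Thus both formulas reduce to orbit-counting problems, which I would solve with Burnside's lemma (or, equivalently, by hand as in the proof of Lemma~\ref{lem:ZlFlO2}) using the centralizer data recalled before the statement: the identity has centralizer all of $D_{2m}$, as does $r_\pi$ when $m$ is even; every other rotation has centralizer the order-$m$ subgroup of rotations; and each reflection has centralizer of order $2P(m)$, namely $\{r_0, t\}$ when $m$ is odd and the Klein four-group $\{r_0, r_\pi, t, r_\pi t\}$ when $m$ is even.

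For the free group, I would identify $\HOM(\F_\ell, D_{2m})$ with $D_{2m}^\ell$ by choosing free generators, with $D_{2m}$ acting by simultaneous conjugation. A tuple is fixed by $g$ exactly when each of its entries centralizes $g$, so $g$ has $|C_{D_{2m}}(g)|^\ell$ fixed tuples, and Burnside gives the orbit count $\frac{1}{2m}\sum_{g\in D_{2m}}|C_{D_{2m}}(g)|^\ell$. Splitting this sum according to the parity of $m$ and according to whether $g$ is the identity, $r_\pi$ (when $m$ is even), another rotation, or a reflection, and simplifying, yields Equation~\eqref{eq:DmFlO2}. This same computation returns the $\Gamma = \Z$ case at $\ell = 1$, providing a check against \eqref{eq:DmZlO2}.

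For $\Z^\ell$ I would partition the set of commuting $\ell$-tuples into two $D_{2m}$-invariant pieces. On tuples all of whose entries are rotations, the conjugation action factors through the residual $\Z/2\Z$-action inverting every entry, and Burnside for that $\Z/2\Z$ gives $\frac12\big(m^\ell + P(m)^\ell\big)$ orbits, since $\Z/m\Z$ has exactly $P(m)$ elements of order dividing $2$. On commuting tuples containing at least one reflection $t$, every entry lies in $C_{D_{2m}}(t)$, which is $\langle t\rangle$ when $m$ is odd and the Klein four-group $K_t$ when $m$ is even; these abelian subgroups meet pairwise only in rotations, so such a tuple determines its ambient abelian subgroup uniquely (there are $m$ of them when $m$ is odd, $m/2$ when $m$ is even), and its stabilizer under conjugation is exactly that subgroup. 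Counting the tuples with entries in one such subgroup and at least one reflection, namely $2^\ell - 1$ when $m$ is odd and $4^\ell - 2^\ell$ when $m$ is even, and dividing by the orbit size gives $P(m)^\ell(2^\ell - 1)$ orbits from this piece; adding the two contributions gives $\frac12\big(m^\ell + P(m)^\ell(2^{\ell+1} - 1)\big)$, which is \eqref{eq:DmZlO2}.

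The hard part will be the stabilizer bookkeeping for the reflection-containing tuples in the $\Z^\ell$ case: one must confirm that no element outside the ambient abelian subgroup fixes such a tuple (because the tuple contains a reflection whose centralizer is already that subgroup) while every element inside it does (immediate from abelianness), so that the orbit-size count is exact and there is no double counting across different reflections. I would run the even and odd cases in parallel through the parameter $P(m)$ but keep the centralizer-order casework explicit to avoid factor-of-two errors.
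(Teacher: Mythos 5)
Your proposal is correct, and every count I checked (the Burnside sum for $\F_\ell$ in both parities, the $\tfrac12(m^\ell+P(m)^\ell)$ rotation-tuple count, and the $P(m)^\ell(2^\ell-1)$ reflection-containing count) reproduces Equations~\eqref{eq:DmZlO2} and \eqref{eq:DmFlO2}, including the degenerate cases $m=1,2$. The route differs from the paper's in a worthwhile way. The paper enumerates tuples type by type as in Lemma~\ref{lem:ZlFlO2} -- types (i)--(iii) with their explicit orbit sizes for $\Z^\ell$, and then obtains the type (iv) count for $\F_\ell$ by subtracting the commuting tuples from $(2m)^\ell$ and dividing by the orbit size of a non-commuting tuple (which is $2m$ for $m$ odd and $m$ for $m$ even, since the stabilizer of a type (iv) tuple is the center); it runs the odd and even cases as two separate computations. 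Your use of Burnside's lemma for $\HOM(\F_\ell,D_{2m})\simeq D_{2m}^\ell$ sidesteps that last step entirely: you never need to identify the stabilizer of a non-commuting tuple, only the centralizer orders $|C_{D_{2m}}(g)|$, and the parity casework collapses into the single parameter $P(m)$. For the $\Z^\ell$ case your two-piece decomposition (all-rotation tuples with the residual $\Z/2\Z$ inversion action, versus commuting tuples containing a reflection pinned to a unique ambient abelian subgroup $C_{D_{2m}}(t)$ of order $2P(m)$) is the same counting the paper does, but packaged so that the stabilizer identification is a one-line structural observation rather than a case check; your worry about double counting is resolved exactly as you anticipate, since distinct subgroups $C_{D_{2m}}(t)$ meet only in rotations. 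The only step you should make explicit in a write-up is the opening reduction: since $D_{2m}$ is finite, $\HOM(\Gamma,D_{2m})$ is a finite discrete set and $\chi$ of its quotient is literally the number of conjugation orbits -- the paper uses this silently, and it is what licenses replacing Euler characteristics by orbit counts throughout.
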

\begin{proof}
First assume $m$ is odd, in which case $r_\pi\notin D_{2m}$. Then there is only one type (i) $\ell$-tuple in $D_{2m}^\ell$, and its orbit is a singleton.
There are $m^\ell-1$ type (ii) $\ell$-tuples, each with an orbit of size $2$, yielding $(m^\ell - 1)/2$ orbits. There are $m(2^\ell - 1)$ type (iii)
$\ell$-tuples that contain a single reflection and the identity, each with an orbit of size $m$, yielding $2^\ell - 1$ orbits. Hence the orbits of
commuting $\ell$-tuples consist of
\[
    1   +   \frac{m^\ell - 1}{2}    +   (2^\ell - 1)
        =   \frac{m^\ell + 2^{\ell + 1} - 1}{2}
\]
points, yielding Equation~\eqref{eq:DmZlO2} in this case.
Subtracting the numbers of type (i), (ii), and (iii) $\ell$-tuples from the total number $(2m)^\ell$ of $\ell$-tuples yields
$(m^\ell - m)(2^\ell - 1)$ type (iv) $\ell$-tuples, each with an orbit of size $2m$, hence $(m^{\ell-1} - 1)(2^\ell - 1)/2$ orbits of type (iv)
$\ell$-tuples. Then $\chi\big(D_{2m}\backslash\HOM(\F_\ell,D_{2m})\big)$ for $m$ odd is given by
\[
    \frac{m^\ell + 2^{\ell + 1} - 1}{2} + \frac{(m^{\ell-1} - 1)(2^\ell - 1)}{2}
    =
    \frac{2^{\ell} + m^{\ell-1}(2^\ell - 1) + m^\ell}{2},
\]
corresponding to Equation~\eqref{eq:DmFlO2} in this case.

Now assume $m$ is even. There are $2^\ell$ type (i) $\ell$-tuples, each with trivial orbit, and $m^\ell - 2^\ell$ type (ii) $\ell$-tuples partitioned
into orbits of size $2$. For a given reflection $s_\theta$, there are $4^\ell - 2^\ell$ tuples consisting of $\{r_0, r_\pi, s_\theta, s_{-\theta}\}$
that are not of type (ii), and $m/2$ choices for $s_\theta$, yielding $m(4^\ell - 2^\ell)/2$ type (iii) $\ell$-tuples in orbits of size $m/2$.
Hence,
\[
    \chi\big(D_{2m}\backslash\HOM(\Z^\ell,D_{2m})\big)
    =
    2^\ell
    + \frac{m^\ell - 2^\ell}{2}
    + (4^\ell - 2^\ell)
    =
    \frac{m^\ell + 2^\ell(2^{\ell+1} - 1)}{2},
\]
completing the proof of Equation~\eqref{eq:DmZlO2}.

We again count the $\ell$-tuples of type (iv) by subtracting those of type (i), (ii), and (iii) from $(2m)^\ell$, yielding
\[
    (2m)^\ell - 2^\ell - (m^\ell - 2^\ell) - \frac{m(4^\ell - 2^\ell)}{2}
    =
    \frac{(2^{\ell} - 1)(2m^\ell - m 2^\ell)}{2}
\]
$\ell$-tuples of type (iv) in orbits of size $m$. Adding the resulting number of orbits to $\chi\big(D_{2m}\backslash\HOM(\Z^\ell,D_{2m})\big)$
yields $\chi\big(D_{2m}\backslash\HOM(\F_\ell,D_{2m})\big)$ and completes the proof of Equation~\eqref{eq:DmFlO2}.
\end{proof}

Combining Lemmas~\ref{lem:ZlFlO2} and \ref{lem:ZlFlDm} with Theorem~\ref{thrm:GamECO2RealRep} yields the following.

\begin{corollary}
\label{cor:GamECO2RealRepFlZl}
Let $\rho\colon\OO(2)\to\U(V)$ be a unitary $\OO(2)$-representation of the form \eqref{eq:genO2Rep} with $V^{\OO(2)} = \{0\}$
and let $V_\R$ denote the set of real points in $V$. Let $P(m) = 2$ if $m$ is even and $1$ if $m$ is odd. For any $\ell\geq 1$,
using the shorthand $\{\alpha_I\}$ to denote $\{\alpha_i:i\in I\}$,
\[
\begin{split}
    \chi_{\Z^\ell} & \big(\OO(2)\ltimes V_\R\big)
        = 2^{2\ell-1}
        \\& + \sum\limits_{\emptyset\neq I\subseteq\underline{n}}
            (-2)^{|I|-2}
            \big(\gcd\{\alpha_I\}^\ell + P(\gcd\{\alpha_I\})^\ell(2^{\ell+1} - 1)\big)
    \\&\quad
        - \frac{1}{2}\sum\limits_{I = \{i_1,i_2\}\subseteq\underline{n}}
            \big(\gcd\{\alpha_I\}^\ell + P(\gcd\{\alpha_I\})^\ell(2^{\ell+1} - 1)\big)
        + \frac{1- (-1)^d}{2}\sum\limits_{i=1}^n \alpha_i^\ell,
\end{split}
\]
and for $\ell \geq 2$,
\[
\begin{split}
    &\chi_{\F_\ell} \big(\OO(2)\ltimes V_\R\big)
    =  2^{\ell-2}(2^\ell + 1) + \frac{1- (-1)^d}{2} \sum\limits_{i=1}^n \alpha_i
    \\&
        + \sum\limits_{\emptyset\neq I\subseteq\underline{n}}
            (-2)^{|I|-2}
            \big((2P(\gcd\{\alpha_I\}))^{\ell} + P(\gcd\{\alpha_I\}) \gcd\{\alpha_I\}^{\ell-1}(2^{\ell} - 1) + \gcd\{\alpha_I\}^\ell \big)
    \\&\quad
        - \frac{1}{2} \sum\limits_{I = \{i_1,i_2\}\subseteq\underline{n}}
            \big( (2P(\gcd\{\alpha_I\}))^{\ell} + P(\gcd\{\alpha_I\}) \gcd\{\alpha_I\}^{\ell-1}(2^{\ell} - 1)
            + \gcd\{\alpha_I\}^\ell \big).
\end{split}
\]
\end{corollary}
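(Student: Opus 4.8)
\textit{Proof proposal.} The plan is simply to specialize the formula of Theorem~\ref{thrm:GamECO2RealRep} to $\Gamma = \Z^\ell$ and $\Gamma = \F_\ell$ by inserting the Euler characteristics of the relevant homomorphism orbit spaces. First I would record the abelian inputs, already noted at the beginning of Section~\ref{subsec:O2RepsFreeGroups}: for both $\Gamma = \Z^\ell$ and $\Gamma = \F_\ell$ one has $\HOM(\Gamma,\Sp^1) \simeq (\Sp^1)^\ell$, so $\chi\big(\HOM(\Gamma,\Sp^1)\big) = 0$, and $\HOM(\Gamma,\Z/\alpha_i\Z) \simeq (\Z/\alpha_i\Z)^\ell$, so $\chi\big(\HOM(\Gamma,\Z/\alpha_i\Z)\big) = \alpha_i^\ell$. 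Consequently the term $\tfrac{(-1)^d - 1}{2}\chi\big(\HOM(\Gamma,\Sp^1)\big)$ in \eqref{eq:GamECO2RealRep} vanishes identically, and its final term becomes $\tfrac{1 - (-1)^d}{2}\sum_{i=1}^n \alpha_i^\ell$.

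Next I would substitute the nonabelian inputs. The leading constant $\chi\big(\OO(2)\backslash\HOM(\Gamma,\OO(2))\big)$ equals $2^{2\ell-1}$ for $\Gamma = \Z^\ell$ (any $\ell\geq 1$) and $2^{\ell-2}(2^\ell+1)$ for $\Gamma = \F_\ell$ ($\ell\geq 2$) by Lemma~\ref{lem:ZlFlO2}; for $\ell = 1$ there is nothing to prove in the free case since $\F_1 = \Z$. Into the two dihedral sums in \eqref{eq:GamECO2RealRep}, indexed by subsets $I$ with $m := \gcd\{\alpha_I\}$, I would insert the values of $\chi\big(D_{2m}\backslash\HOM(\Gamma,D_{2m})\big)$ from Lemma~\ref{lem:ZlFlDm}: $\tfrac12\big(m^\ell + P(m)^\ell(2^{\ell+1}-1)\big)$ for $\Gamma = \Z^\ell$, and $\tfrac12\big((2P(m))^\ell + P(m)m^{\ell-1}(2^\ell-1) + m^\ell\big)$ for $\Gamma = \F_\ell$.

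The one place that calls for a moment's care is the coefficient in the first dihedral sum: combining the external factor $-(-2)^{|I|-1}$ from \eqref{eq:GamECO2RealRep} with the $\tfrac12$ from Lemma~\ref{lem:ZlFlDm} gives $-(-2)^{|I|-1}/2 = (-1)^{|I|}2^{|I|-2} = (-2)^{|I|-2}$, which is exactly the coefficient displayed in the statement; the second dihedral sum, over two-element subsets $I = \{i_1,i_2\}$, simply retains the factor $-\tfrac12$. After these substitutions one reads off the two displayed formulas verbatim, so there is no real obstacle here: the mathematical content lives entirely in Theorem~\ref{thrm:GamECO2RealRep} and Lemmas~\ref{lem:ZlFlO2} and \ref{lem:ZlFlDm}, and this corollary is pure bookkeeping.
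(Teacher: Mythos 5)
Your proposal is correct and coincides with the paper's own argument: the paper gives no separate proof of this corollary beyond the remark that it follows by combining Lemmas~\ref{lem:ZlFlO2} and \ref{lem:ZlFlDm} with Theorem~\ref{thrm:GamECO2RealRep}, and your substitution, including the sign bookkeeping $-(-2)^{|I|-1}\cdot\tfrac12 = (-2)^{|I|-2}$, is exactly that computation. One small point worth flagging: your (correct) substitution yields $\tfrac{1-(-1)^d}{2}\sum_{i=1}^n \alpha_i^\ell$ in the $\F_\ell$ formula as well, whereas the corollary as printed has $\sum_{i=1}^n \alpha_i$ there; this appears to be a typo in the statement rather than an error in your derivation.
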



\section{Computations for linear symplectic quotients}
\label{sec:SympQuot}

In this section, we compute $\chi_\Gamma(\G)$ in the case that $\G$ is the translation groupoid representing the real linear symplectic quotient
associated to a unitary representation of an arbitrary compact Lie group $G$. In contrast to many of the other cases considered in this paper,
we demonstrate that $\chi_\Gamma(\G)$ only depends on the group $G$ and not on the representation.

Let $G$ be a compact Lie group and $\rho\colon G\to\GL(V)$ a finite-dimensional linear $G$-representation.
Equip $V$ with a $G$-invariant hermitian inner product, and then the imaginary part of the hermitian product is a symplectic form
on the underlying real vector space of $V$. The $G$-action on $V$ is Hamiltonian, admitting a unique homogeneous quadratic
moment map $\mu\colon V\to\mathfrak{g}^\ast$ where $\mathfrak{g}$ denotes the Lie algebra of $G$ and $\mathfrak{g}^\ast$ its dual.
Specifically, identifying $\mathfrak{g}^\ast$ with $\R^\ell$ by choosing a basis, the component functions of $\mu$ have bi-degree $(1,1)$,
i.e., are sums of terms of the form $x_i\overline{x_j}$ where $(x_1,\ldots,x_n)$ are complex coordinates for $V$.
The set $\mu^{-1}(0)$, called the \emph{shell}, is $G$-invariant and real algebraic, hence definable.
The \emph{real linear symplectic quotient at level $0$} is the quotient $G\backslash\mu^{-1}(0)$.
Note that if $G$ is finite, the moment map is zero so that $\mu^{-1}(0) = V$ and the symplectic
quotient is the usual quotient $G\backslash V$.
See \cite{ArmsGotayJennings,SjamaarLerman,HerbigSchwarzSeaton} for more information.

\begin{theorem}
\label{thrm:SympQuot}
Let $G$ be a compact Lie group, $\rho\colon G\to\GL(V)$ a finite-dimensional linear $G$-representation, and $\Gamma$
a finitely presented discrete group. Equip $V$ with a $G$-invariant hermitian inner product and
let $\mu\colon V\to\mathfrak{g}^\ast$ denote the homogeneous quadratic moment map so that the real symplectic quotient at
level $0$ is the orbit space $\vert G\ltimes\mu^{-1}(0)\vert$ of the groupoid $G\ltimes\mu^{-1}(0)$. Then
\begin{equation}
\label{eq:SympQuot}
    \chi_\Gamma\big( G\ltimes\mu^{-1}(0)\big)
        =   \chi\big(G\backslash\HOM(\Gamma,G)\big)
        =   \chi_{\Gamma}(G \ltimes \{pt\}).
\end{equation}
\end{theorem}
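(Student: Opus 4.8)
The plan is to evaluate $\chi_\Gamma\big(G\ltimes\mu^{-1}(0)\big)$ through Equation~\eqref{eq:GamECAsSumFinerPartition} applied to the decomposition of $\vert G\ltimes\mu^{-1}(0)\vert=G\backslash\mu^{-1}(0)$ into orbit types, showing that the orbit type of $(G)$ contributes exactly $\chi\big(G\backslash\HOM(\Gamma,G)\big)$ while every other orbit type contributes nothing. Two elementary observations set this up. First, because each component of $\mu$ is a sum of monomials $x_i\overline{x_j}$ of bi-degree $(1,1)$, the diagonal circle $D=\{e^{\sqrt{-1}\,\theta}I\}\cong\Sp^1$ leaves $\mu^{-1}(0)$ invariant; since $\mu^{-1}(0)$ is also real algebraic, hence definable, and $G$-invariant, this places us in the situation of Corollary~\ref{cor:SimpCircleActionDiag} with $X=\mu^{-1}(0)$. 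Second, $\mu$ is homogeneous and its restriction to $V^G$ has vanishing differential (the infinitesimal action is trivial there), so $\mu$ vanishes on $V^G$ and $V^G\subseteq\mu^{-1}(0)$; hence the orbit type of $(G)$ is $G\backslash\big(\mu^{-1}(0)\big)_G=G\backslash V^G=V^G$, a complex vector space with $\chi(V^G)=1$, and by \eqref{eq:GamECAsSumFinerPartition} it contributes $\chi\big(G\backslash\HOM(\Gamma,G)\big)$. This last quantity equals $\chi_\Gamma(G\ltimes\{pt\})$ because $G\ltimes\{pt\}$ has a single orbit, with isotropy $G$.

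It then remains to show that for every isotropy group $H$ occurring on $\mu^{-1}(0)$ with $H\lneq G$, the orbit type $(H)$ does not contribute. Since $G_0=G\neq H$, the origin does not lie in $\big(\mu^{-1}(0)\big)_H$, so by Corollary~\ref{cor:SimpCircleActionDiag} it is enough to verify that the intersection of $D_{|(\mu^{-1}(0))_H}$ with $\rho\big(N_G(H)\big)_{|(\mu^{-1}(0))_H}$ in $\HOMEO\big((\mu^{-1}(0))_H\big)$ is finite. I would argue by contradiction. The set $C$ of $n\in N_G(H)$ for which $\rho(n)$ acts on $\big(\mu^{-1}(0)\big)_H$ as a scalar is a closed subgroup of $N_G(H)$, and the kernel of $n\mapsto\rho(n)_{|(\mu^{-1}(0))_H}$ is exactly $H$ (any $n$ fixing a nonzero $v\in\big(\mu^{-1}(0)\big)_H$ lies in $G_v=H$, and $H$ fixes $V^H$ pointwise). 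Thus if the intersection were infinite we would have $\dim C>\dim H$, and a routine compact Lie group argument produces a circle subgroup $C'\leq C\leq G$, with $C'\not\leq H$, acting on $\big(\mu^{-1}(0)\big)_H$ as multiplication by $z^k$ for some $k\neq 0$, where $z$ is a coordinate on $C'\cong\Sp^1$. Choose any nonzero $v\in\big(\mu^{-1}(0)\big)_H$. Decomposing $V$ into $C'$-weight spaces, the relation $c\cdot v=z^k v$ forces $v$ to be a weight-$k$ vector, so the homogeneous quadratic moment map for $C'$ takes the nonzero value $\tfrac12 k\lvert v\rvert^2$ at $v$. But that value is the restriction of $\mu(v)\in\mathfrak g^\ast$ to $\mathrm{Lie}(C')\subseteq\mathfrak g$, hence is $0$ since $v\in\mu^{-1}(0)$ — a contradiction. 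Therefore the intersection is finite, Corollary~\ref{cor:SimpCircleActionDiag} applies, the orbit type $(H)$ contributes nothing, and the computation of \eqref{eq:GamECAsSumFinerPartition} collapses to the single term $\chi\big(G\backslash\HOM(\Gamma,G)\big)$.

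The heart of the argument, and the step I expect to be the main obstacle, is this finiteness claim: one must rule out a positive-dimensional subgroup of $G$ acting on the shell $\mu^{-1}(0)$ by a nontrivial circle of scalars. Equivalently, one shows that $\mu^{-1}(0)$ contains no nonzero weight vector of nonzero weight for any circle subgroup of $G$, and this is precisely where the bi-degree $(1,1)$ (equivalently, homogeneous quadratic) nature of the moment map is indispensable; it is also the structural reason the answer depends only on $G$ and not on $V$, in contrast to the earlier sections. Two minor technical points I would need to dispatch along the way are the standard fact that the orbit-type decomposition of a semialgebraic compact-group action is a finite partition into definable subsets (so that Corollary~\ref{cor:SimpCircleActionDiag}, and hence \eqref{eq:GamECAsSumFinerPartition}, genuinely applies), and the elementary compact Lie group argument that extracts the circle $C'$ and the nonzero weight $k$ from an infinite $C$.
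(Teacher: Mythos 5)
Your proposal is correct and follows essentially the same route as the paper: localize with the diagonal scalar circle $D$ via Corollary~\ref{cor:SimpCircleActionDiag}, reduce to the finiteness of $D_{|Y}\cap\rho\big(N_G(H)\big)_{|Y}$ on each orbit type with $H\lneq G$, and derive a contradiction from the fact that the moment map of a circle acting by scalars is a nonzero multiple of the norm squared, which cannot vanish on a nonzero point of the shell. The only (immaterial) difference is that you extract an honest circle subgroup $C'\leq G$ and use the $C'$-weight decomposition of $V$, whereas the paper restricts to the complex span $W$ of $Y_j$ and evaluates the component of $\mu_W$ dual to the Lie algebra of $D_{|W}$.
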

\begin{proof}
Let
\[
    \mu^{-1}(0) =   X_1\sqcup X_2\sqcup \cdots\sqcup X_s
\]
denote the partition of $\mu^{-1}(0)$ into orbit types, let $H_i$ be the isotropy group of some point in $X_i$ for each $i$,
and let $Y_i = (X_i)_{H_i} = X_{H_i}$ denote the points in $X$ with isotropy equal to $H_i$.
Order the $X_i$ so that $H_1 = G$, i.e., $X_1 = Y_1 = \mu^{-1}(0)^G$ is the element of the partition that contains the origin.
As in Section~\ref{sec:SimpCircleAction}, let $D = \{ e^{\sqrt{-1}\,\theta}I\in\GL(V):\theta\in\R\}\simeq\Sp^1$.
Recall that as $D$ commutes with the action of $G$, each $Y_i$ is $D$-invariant, and $D_{|Y_i}$ denotes the subgroup of
$\HOMEO(Y_i)$ given by the restriction of $D$.
Fixing $j > 1$, we claim that $D_{|Y_j}\cap\rho\big(N_G(H_j)\big)_{|Y_j}$ is finite; this is of course clear if $G$ is finite.
Suppose for contradiction that $D_{|Y_j}\cap\rho\big(N_G(H_j)\big)_{|Y_j}$ is infinite. As $D_{|Y_j}\cap\rho\big(N_G(H_j)\big)_{|Y_j}$
is a closed subgroup of $D_{|Y_j}\simeq\Sp^1$, it follows that $D_{|Y_j}\cap\rho_{|Y_j}\big(N_G(H_j)\big) = D_{|Y_j}$ so that
$D_{|Y_j}\leq\rho_{|Y_j}\big(N_G(H_j)\big)$. Let $W$ be the complex linear subspace of $V$ spanned by $Y_j$,
and then $W\subseteq V^{H_j}$. By linearity, it follows that $D_{|W}\subseteq\rho\big(N_G(H_j)\big)_{|W}$.
Let $\mathfrak{n}_j$ denote the Lie algebra of $N_G(H_j)$ and let $\mu_W\colon W\to\mathfrak{n}_j^\ast$ denote the moment map
for the $N_G(H_j)$-action on $W$. Choose a basis for $\mathfrak{n}_j^\ast$ that contains an element dual to the
Lie algebra of the subgroup $D_{|W}\leq N_G(H_j)$ and let $\mu_W^D\colon W\to\R$ denote the component of $\mu_W$ corresponding to this element.
Note that $\mu_W$ is the restriction to $W$ of the moment map of the $N_G(H_j)$-action on $V$ so that for $w\in W$,
$\mu(w) = 0$ implies $\mu_W(w) = 0$. In complex coordinates $(w_1,\ldots,w_k)$ for $W$, the moment map of $D_{|W}$
is given by a scalar multiple of $|w_1|^2 + |w_2|^2 + \cdots + |w_k|^2$, see
\cite[Ex.~2.1]{GuilleminGinzburgKarshon}, \cite[Sec.~2]{HerbigIyengarPflaum}, or \cite[pp.~5]{FarHerSea}.
Therefore, $Y_j$ consists only of the origin, a contradiction. Hence, for each $j > 1$, $D$ acts on $X_j$ with
finite isotropy. By Corollary~\ref{cor:SimpCircleActionDiag}, it follows that
\[
    \chi_\Gamma\big( G\ltimes\mu^{-1}(0)\big)   =   \chi(G\backslash X_1)\chi\big(G\backslash\HOM(\Gamma,G)\big).
\]
As $G$ acts trivially on $V^G$, we have $\mu_{|V^G} = 0$ so that $V^G \subseteq\mu^{-1}(0)$.
Therefore, $X_1 = \mu^{-1}(0)^G = V^G$ is a complex vector space and hence has Euler characteristic $1$, completing the proof.
\end{proof}

\begin{remark}
\label{rem:SympQuotOther}
If $\rho\colon G\to\GL(V)$ is a finite-dimensional linear representation of a compact Lie group $G$, then the proof of Theorem~\ref{thrm:SympQuot}
applies as well with $\mu^{-1}(0)$ replaced by other $G$- and $D$-invariant definable subsets $X\subseteq V$. Letting $H_i$, $i=1,\ldots, s$
be a choice of isotropy group from each conjugacy class, we require that if $N_G(H_i)$ contains a subgroup isomorphic to $\Sp^1$ that
acts on $V^{H_i}$ as scalar multiplication (up to a finite kernel), then $X\cap V_{H_i} = \emptyset$. If this holds,
then Equation~\eqref{eq:SympQuot} does as well with $\mu^{-1}(0)$ replaced with $X$. When $G = \Sp^1$ acts on $V\simeq\C^n$ with weight
vector $(a_1,\ldots,a_n)$, a subset $X\subseteq V$ defined by an equation of the form
\[
    \sum\limits_{i=1}^n b_i |x_i|^{p_i} = 0
\]
with each $p_i\in\Z$ positive satisfies this hypothesis provided that the $b_i$ are nonzero and, if $a_{i_1} = a_{i_2} = \cdots a_{i_k}$ for
some subset $\{i_1,\ldots,i_k\}\subseteq\underline{n}$, then the signs of the $b_{i_j}$, $j=1,\ldots,k$, coincide. The hypothesis
about the signs of the $b_i$ can as well be removed using the argument of Theorem~\ref{thrm:GamECS1Rep}, replacing $D$ with a circle
acting in the coordinates $(x_{i_1},\ldots,x_{i_k})$ with positive and negative weights so that its intersection with the $G$-action is finite.
\end{remark}

Let us illustrate Theorem~\ref{thrm:SympQuot} with the following.

\begin{example}
\label{ex:623SympQuot}
Let $G = \Sp^1$ and $V\simeq\C^3$ with weight vector $(-6,2,3)$.
In coordinates $(x_1, x_2, x_3)$ for $V$, a generating set of the real invariants of the action is given by
$m_1 = |x_1|^2$, $m_2 = |x_2|^2$, and $m_3 = |x_3|^2$, along with the real and imaginary parts of
$p_1 = x_1 x_3^2$, $p_2 = x_1 x_2^3$, and $p_3 = x_2^3 \overline{x_3}^2$. We have
\[
    \mu^{-1}(0) = \{ (x_1, x_2, x_3)\in\C^3 : 6m_1 = 2m_2 + 3m_3 \}
\]
so that $m_1$ is redundant on the shell. The symplectic quotient can be identified with the image of the Hilbert embedding
$H\colon\mu^{-1}(0)\to\R^8$ given by
\[
    H(x_1,x_2,x_3)  =   \big( m_2, m_3, \Real(p_1), \Imaginary(p_1), \Real(p_2), \Imaginary(p_2), \Real(p_3), \Imaginary(p_3)\big).
\]
On the shell, the invariants satisfy relations generated by the following, which we express in complex coordinates for brevity:
\begin{alignat*}{3}
 &2 m_2 m_3^2 + 3 m_3^3 - 6 p_1 \overline{p_1},
    &&6 p_1 \overline{p_2} - 2 m_2 \overline{p_3} - 3 m_3 \overline{p_3},\qquad\qquad
    &&m_2^3 p_1 - p_2 \overline{p_3}
    \\
 &6 \overline{p_1} p_2 - 2 m_2 p_3 - 3 m_3 p_3,
    &&m_3^2 \overline{p_2} - \overline{p_1} \overline{p_3},
    &&m_3^2 p_2 - p_1 p_3,\\
 &2 m_2^4 + 3 m_2^3 m_3 - 6 p_2 \overline{p_2},
    &&m_2^3 \overline{p_1} - \overline{p_2} p_3,\\
 &27 m_3^5 - 24 m_2^2 p_1 \overline{p_1} + 36 m_2 m_3 p_1 \overline{p_1} &&- 54 m_3^2 p_1 \overline{p_1} + 8 p_3 \overline{p_3}.
\end{alignat*}
These relations define the Zariski closure of the image of the Hilbert embedding in $\R^8$. The Jacobian of the relations has
rank $8$ at points where all coordinates $x_i$, and hence all invariants, are nonzero.

Define the sets
\begin{align*}
    &X_1 = \{0\}, \qquad
    X_2 = \{ (x_1,x_2,0)\in\mu^{-1}(0) : x_1, x_2\neq 0 \}, \\
    &X_3 = \{ (x_1,0,x_3)\in\mu^{-1}(0): x_1, x_3\neq 0 \}, \\
    &X_4 = \{ (x_1,x_2,x_3)\in\mu^{-1}(0): x_1, x_2, x_3\neq 0\}.
\end{align*}
Points in $X_2$ have isotropy $R(2)$ and satisfy $m_3 = p_1 = p_3 = 0$. These points are singular, as
the Jacobian of the relations at these points has rank $5$. The image under $H$ of $X_2$ consists of points
$\big( m_2, 0, 0, 0, \Real(p_2), \Imaginary(p_2), 0, 0 \big)\in\R^8$,
such that $m_2\geq 0$ and $3(\Real(p_2)^2 + \Imaginary(p_2)^2) = m_2^4$, hence $\Sp^1\backslash X_2$ is homeomorphic to $\C\smallsetminus\{0\}$.
Points in $X_3$, have isotropy $R(3)$ and satisfy $m_2 = p_2 = p_3 = 0$. These points are as well singular, as
the Jacobian of the relations at these points has rank $6$. The image under $H$ of $X_3$ consists of points
$\big( 0, m_3, \Real(p_1), \Imaginary(p_1), 0, 0, 0, 0 \big)\in\R^8$
with $m_1\geq 0$ and $2(\Real(p_2)^2 + \Imaginary(p_2)^2) = m_1^3$, as well homeomorphic to $\C\smallsetminus\{0\}$.
Points in $X_4$ have trivial isotropy, and their image under $H$ is the collection of nonsingular points in $\Sp^1\backslash\mu^{-1}(0)$,
homeomorphic to $(\C\smallsetminus\{0\})^2$. Hence, in this case, one can verify from this description that
$\chi_\Gamma\big( \Sp^1\ltimes\mu^{-1}(0)\big) = \chi\big(\Sp^1\backslash\HOM(\Gamma,\Sp^1)\big)$ for each finitely presented $\Gamma$.
\end{example}

Many low-dimensional examples of linear symplectic quotients by a positive-dimensional compact Lie group $G$ admit homeomorphisms to
symplectic orbifolds that preserve the decomposition into orbit types
and many other structures; see \cite{GotayBos,LermanMontgomerySjamaar,FarHerSea}. By the results of \cite{HerbigSchwarzSeaton},
this phenomenon does not occur for larger representations; see also \cite{HerbigSchwarzSeaton4}. However, because the $\Gamma$-Euler
characteristics depend on the isotropy groups, they distinguish between the $G$-symplectic quotient and the symplectic orbifold in many cases. We
illustrate this with the following example; see \cite[Prop.~5.3 and 5.5]{HerbigSchwarzSeaton}.

\begin{example}
\label{ex:OrbifoldSympQuot}
Let $G = \SU(2)$, let $R_d$ denote the $(d+1)$-dimensional irreducible representation of $\SU(2)$ on binary forms of degree $d$, and
let $\mu_{R_d}$ denote the moment map of $R_d$.
Then by \cite[Prop.~5.3 and 5.5]{HerbigSchwarzSeaton}, the symplectic quotient corresponding to $R_3$ is homeomorphic to the quotient of
$W_3 = \C$ by $\langle i\rangle\simeq \Z/4\Z$ acting as scalar multiplication, and the symplectic quotient corresponding to $R_4$ is
homeomorphic to the quotient of $W_4 = \C^2$ by the
symmetric group $\mathcal{S}_3$ acting diagonally on two copies of the standard representation on $\C\simeq\R^2$. Each of these homeomorphisms
is shown to in fact be a graded regular symplectomorphism, see the reference for the definition, and hence
preserves the partition into orbit types. Therefore, in the case of $R_3$, the orbit space of $\SU(2)\ltimes \mu_{R_3}^{-1}(0)$ is partitioned into
the origin and a space homeomorphic to $\C\smallsetminus\{0\}$, while in the case of $R_4$, $\vert\SU(2)\ltimes \mu_{R_r}^{-1}(0)\vert$ is partitioned
into the origin, a space homeomorphic to $\C\smallsetminus\{0\}$, and a space finitely covered by $\C^2$ with three complex $1$-dimensional
subspaces removed. Except for the origin, each of these orbit types has vanishing Euler characteristic so
that for any finitely presented $\Gamma$, we confirm the result of Theorem~\ref{thrm:SympQuot} that
\[
    \chi_\Gamma\big(\SU(2)\ltimes \mu_{R_3}^{-1}(0)\big)
        =   \chi_\Gamma\big(\SU(2)\ltimes \mu_{R_4}^{-1}(0)\big)
        =   \chi\big(\SU(2)\backslash\HOM(\Gamma,\SU(2))\big).
\]
In the same way, this description and Theorem~\ref{thrm:SympQuot} yield
\[
    \chi_\Gamma\big(\langle i\rangle\ltimes W_3\big)
        =   \chi\big((\Z/4\Z)\backslash\HOM(\Gamma,\Z/4\Z)\big),
\]
and
\[
    \chi_\Gamma\big(\mathcal{S}_3\ltimes W_4\big)
        =   \chi\big(\mathcal{S}_3\backslash\HOM(\Gamma,\mathcal{S}_3)\big).
\]
It is clear that the $\chi_\Gamma$ of the orbifolds differ from those of the corresponding $\SU(2)$-symplectic quotients;
when $\Gamma = \Z$, we have
$\chi\big(\SU(2)\backslash\HOM(\Z,\SU(2))\big) = 1$,
$\chi\big(\Z/4\Z\backslash\HOM(\Z,\Z/4\Z)\big) = 4$,
and
$\chi\big(\mathcal{S}_3\backslash\HOM(\Z,\mathcal{S}_3)\big) = 3$.
\end{example}

\begin{example}
\label{ex:NonOrbifoldSympQuot}
For an example with $G = \SU(2)$ that is not graded regularly symplectomorphic to an orbifold,
let $V = 2R_2$ using the notation of Example~\ref{ex:OrbifoldSympQuot};
see the proof of \cite[Prop.~5.1]{HerbigSchwarzSeaton} and \cite[Sec.~2, 6.2]{CapeHerbigSeaton}.
The kernel of the action is $\{\pm 1\}$, and $V$ can be identified with $4\R^3$ where $\SU(2)/\{\pm 1\}\simeq\SO(3)$ acts as the defining representation
on each copy of $\R^3$.
Using coordinates $(\boldsymbol{x}_1,\boldsymbol{x}_2,\boldsymbol{x}_3,\boldsymbol{x}_4)$ for $V$ where each $\boldsymbol{x}_i\in\R^3$,
the invariants are given by $u_{ij} = \langle\boldsymbol{x}_i,\boldsymbol{x}_j\rangle$ for $i \leq j$. The shell is given by
\[
    \mu_{2R_2}^{-1}(0) = \big\{ (\boldsymbol{x}_1,\boldsymbol{x}_2,\boldsymbol{x}_3,\boldsymbol{x}_4)\in 4\R^4 :
                            \boldsymbol{x}_1\wedge\boldsymbol{x}_2 + \boldsymbol{x}_3\wedge\boldsymbol{x}_4 = \boldsymbol{0}\}.
\]
The Hilbert embedding $H\colon\mu_{2R_2}^{-1}(0)\to\R^{10}$ is given by
\[
    H(\boldsymbol{x}_1,\boldsymbol{x}_2,\boldsymbol{x}_3,\boldsymbol{x}_4)
        =   (u_{11}, u_{12},u_{13}, u_{14}, u_{22}, u_{23}, u_{24}, u_{33}, u_{34}, u_{44}),
\]
and the Zariski closure of the image of $H$ is defined by the 9 relations
\begin{alignat*}{2}
&u_{14}u_{23}-u_{13}u_{24}+u_{34}^2-u_{33}u_{44},
    &&u_{14}u_{22}-u_{12}u_{24}+u_{24}u_{34}-u_{23}u_{44},
\\
&u_{13}u_{22}-u_{12}u_{23}+u_{24}u_{33}-u_{23}u_{34},
    &&u_{12}u_{14}-u_{11}u_{24}+u_{14}u_{34}-u_{13}u_{44},
\\
&u_{12}u_{13}-u_{11}u_{23}+u_{14}u_{33}-u_{13}u_{34},
    &&u_{12}^2-u_{11}u_{22}-u_{34}^2+u_{33}u_{44},
\\
&u_{24}^2u_{33}-2u_{23}u_{24}u_{34}+u_{22}u_{34}^2+u_{23}^2u_{44}
    &&-u_{22}u_{33}u_{44},
\\
&u_{14}u_{24}u_{33}-2u_{13}u_{24}u_{34}+u_{12}u_{34}^2+u_{34}^3
    &&+u_{13}u_{23}u_{44}-u_{12}u_{33}u_{44}-u_{33}u_{34}u_{44},
\\
&u_{14}^2u_{33}-2u_{13}u_{14}u_{34}+u_{11}u_{34}^2+u_{13}^2u_{44}
    &&-u_{11}u_{33}u_{44}.
\end{alignat*}
The orbit types are given by $X_1 = \{\boldsymbol{0},\boldsymbol{0},\boldsymbol{0},\boldsymbol{0}\}$ with isotropy $\SU(2)$,
the set $X_2$ of $(\boldsymbol{x}_1,\boldsymbol{x}_2,\boldsymbol{x}_3,\boldsymbol{x}_4)$ such that the $\boldsymbol{x}_i$
are parallel and not all $\boldsymbol{0}$ fixed by subgroups isomorphic to $\Sp^1$, and the remainder $X_3$ of the shell with
isotropy $\{\pm 1\}$. The rank of the Jacobian of the relations is $9$ generically, but $3$ on $X_2$ so that $X_2$ consists
of singular points.
\end{example}

\begin{example}
\label{ex:S1SympQuot}
Let $G = \Sp^1$, let $\rho\colon\Sp^1\to\U(V)$ be a unitary $\Sp^1$-representation,
and let $\mu_V$ denote the corresponding moment map. Using the notation of Section~\ref{sec:CircleReps},
let $(x_1,\ldots,x_n)$ be coordinates for $V$ with respect to a basis on which the action is diagonal, and let
$(a_1,\ldots,a_n)\in\Z^n$ be the corresponding weight vector. Then the shell $\mu_V^{-1}(0)$ is the set of points such that
$\sum_{i=1}^n a_i |x_i|^2 = 0$, which satisfies the hypotheses of Corollary~\ref{cor:GamECS1Set}. The set of fixed points
$\mu^{-1}(0)^{\Sp^1} = V^{\Sp^1}$ is a complex vector space with Euler characteristic $1$, and for each $i$ such that
$a_i\neq 0$, we have $\mu^{-1}(0)\cap V_{\{i\}} = \emptyset$. Hence Corollary~\ref{cor:GamECS1Set} yields Theorem~\ref{thrm:SympQuot}
in this case.
\end{example}

\begin{example}
\label{ex:O2SympQuot}
Let $G = \OO(2)$, let $\rho\colon\OO(2)\to\U(V)$ be a unitary $\OO(2)$-representation with $V^{\OO(2)} = \{0\}$ as in \eqref{eq:genO2Rep},
and let $\mu_V$ denote the corresponding moment map.
Using the description in the proof of Theorem~\ref{thrm:GamECO2Rep}, one may similarly describe a direct computation of
$\chi_\Gamma\big(\OO(2)\ltimes\mu_V^{-1}(0)\big)$. In the coordinates used in Theorem~\ref{thrm:GamECO2Rep}, the shell is given by
\[
    \mu_V^{-1}(0) = \left\{ \boldsymbol{v}\in V : \sum\limits_{i=1}^n \alpha_i(|x_{i,1}| - |x_{i,2}|) = 0\right\}.
\]
The sets $\boldsymbol{X}_i$ and $\boldsymbol{X}_I^\times$ do not intersect the shell, and the sets $\boldsymbol{X}_I^\ast$ and $\boldsymbol{Y}_J$
are completely contained in the shell so that the argument of Theorem~\ref{thrm:GamECO2Rep} applies. Similarly, the intersections of the sets
$\boldsymbol{X}_I$ and $\boldsymbol{X}_{I,J}$ with the shell are invariant under the action of the circle by scalar multiplication so that the
same argument demonstrates that these sets have orbit spaces with vanishing Euler characteristic. One may also describe the
$\boldsymbol{X}_I^\ast$, $\boldsymbol{Y}_J$, $\boldsymbol{X}_I$, and $\boldsymbol{X}_{I,J}$ explicitly; there are several cases to consider,
but the orbit space of each admits a description with either $\C\smallsetminus\{0\}$ or $\Sp^1$ as a factor, and hence has vanishing Euler
characteristic.
\end{example}

To conclude, let us briefly recall how real linear symplectic quotients form local models for symplectic quotients of manifolds;
see \cite[Sec.~6]{HerbigSchwarz} and \cite[Sec.~2]{SjamaarLerman}.
Suppose $(M,\omega)$ is a symplectic manifold equipped with the Hamiltonian action of a compact Lie group $G$ with moment map
$\mu_M\colon M\to\mathfrak{g}^\ast$. If $x\in M$ such that $\mu_M(x) = 0$, then by the local normal form theorem for the moment map
\cite{GuilleminSternbergNormalForm,MarleModeleHamiltonianAction}, a $G$-invariant neighborhood $U$ of the $G$-orbit of $x$ in $M$ is $G$-equivariantly
symplectomorphic to $Y = G\times_{G_x} (\mathfrak{m}^\ast\times V)$ where $\mathfrak{m}$ is a complement to the
Lie algebra $\mathfrak{g}_x$ of $G_x$ in $\mathfrak{g}$ as $G_x$-representations and $V$ is the \emph{symplectic slice} at $x$, a subspace
of the slice representation at $x$ for the $G$-action on $M$. Then $V$ can be equipped with a compatible complex structure with respect to which
it is a unitary $G_x$-representation as above. Let $\mu_V\colon V\to\mathfrak{g}_x^\ast$ denote the moment map for $V$ and
$\mu_Y\colon Y\to \mathfrak{g}^\ast$ the moment map for $Y$. Using the results of \cite[pp.384--7]{SjamaarLerman},
the embedding of $\mu_V^{-1}(0)$ into $\mu_Y^{-1}(0)$ induces a weak equivalence
$G_x\ltimes\mu_V^{-1}(0) \to G\ltimes\mu_Y^{-1}(0)$, see \cite[Sec.~2.1]{FarsiSeatonEC}. It follows that the translation groupoid
$G_x\ltimes\mu_V^{-1}(0)$ presenting the real linear symplectic quotient at level $0$ associated to $V$ is Morita equivalent as a topological groupoid
to the translation groupoid $G\ltimes \big(U\cap\mu_M^{-1}(0)\big)$ presenting a neighborhood of the orbit of $x$ in the symplectic quotient
$\mu_M^{-1}(0)/G$. In particular, $\chi_\Gamma\Big(G\ltimes \big(U\cap\mu_M^{-1}(0)\big)\Big) = \chi_\Gamma\big(G_x\ltimes\mu_V^{-1}(0)\big)$,
and the latter can be computed using Theorem~\ref{thrm:SympQuot}. See \cite[Def.~58, Rem.~59]{MetzlerTopSmoothStacks} for the definition of Morita
equivalence and \cite[Cor.~4.15]{FarsiSeatonEC}.


\bibliographystyle{amsplain}
\bibliography{FMS-EulerO2}

\end{document}